\DeclareSymbolFont{bbold}{U}{bbold}{m}{n}
\DeclareSymbolFontAlphabet{\mathbbold}{bbold}
\newcommand{\N}{\mathbb{N}}
\newcommand{\Z}{\mathbb{Z}}
\newcommand{\Q}{\mathbb{Q}}
\newcommand{\R}{\mathbb{R}}
\newcommand{\C}{\mathbb{C}}
\newcommand{\T}{\mathbb{T}}
\newcommand{\1}{\mathbbold{1}}
\newcommand{\loc}{\mathrm{loc}}
\newcommand{\lu}{\mathrm{unif}}
\newcommand{\unif}{\mathrm{unif}}
\newcommand{\Mloc}{\mathcal{M}_{\loc}(\R)}
\newcommand{\M}{\mathcal{M}_{\loc,\lu}(\R)}
\newcommand{\Id}{I}
\newcommand{\Dir}{{\rm D}}
\newcommand{\Neu}{{\rm N}}
\newcommand{\SL}{\mathrm{SL}}
\newcommand{\dist}{\mathrm{dist}}
\newcommand{\uD}{\u@DN{\Dir}}
\newcommand{\uN}{\u@DN{\Neu}}
\newcommand{\u@DN}[1]{u_{#1\mkern-1mu}}
\DeclareMathOperator{\spt}{spt}
\DeclareMathOperator{\diam}{diam}
\DeclareMathOperator{\sgn}{sgn}
\renewcommand{\Re}{\operatorname{Re}}
\DeclareMathOperator{\tr}{tr}
\newcommand{\from}{\colon}
\let\phi\varphi
\let\le\leqslant
\let\leq\leqslant
\let\ge\geqslant
\let\geq\geqslant
\def\@row#1,{#1\@ifnextchar;{\@gobble}{&\@row}}
\def\@matrix{%
    \expandafter\@row\my@arg,;%
    \@ifnextchar({\\ \get@in@paren{\@matrix}}{\after@matrix}%
    }
\def\matrixtype#1#2#3{%
    \ifmmode\def\after@matrix{\end{#2}\right#3}%
    \else\def\after@matrix{\end{#2}\right#3$}$\fi
    \left#1\begin{#2}\get@in@paren{\@matrix}%
    }
\def\@column#1,{#1\@ifnextchar;{\@gobble}{\\ \@column}}
\newcommand\vect{}
\def\svect(#1){\left(\begin{smallmatrix}\@column#1,;\end{smallmatrix}\right)}
\def\vect{\get@in@paren{\@vect}}
\def\@vect{\left(\begin{matrix}\expandafter\@column\my@arg,;\end{matrix}\right)}
\def\get@in@paren#1({\def\my@arg{}\def\my@rest{}\def\after@get{#1}\get@arg}
\let\e@a\expandafter
\def\get@arg#1){\e@a\kl@test\my@rest#1(;}
\def\kl@test#1(#2;{\e@a\def\e@a\my@arg\e@a{\my@arg#1}%
                   \ifx:#2:\let\my@exec\after@get
                   \else\let\my@exec\get@arg
                        \e@a\def\e@a\my@arg\e@a{\my@arg(}%
                        \def@rest#2;%
                   \fi\my@exec}
\def\def@rest#1(;{\def\my@rest{#1\kl@zu}}
\def\kl@zu{)}
\newcommand\MyPairedDelimiter{%
  \@ifstar{\My@Paired@Delimiter{{}}}
          {\My@Paired@Delimiter{}}%
}
\newcommand\My@Paired@Delimiter[4]{%
  \newcommand#2{%
    \@ifstar{\start@PD{#1}{\delimitershortfall=-1sp}{#3}{#4}}
            {\start@PD{#1}{}{#3}{#4}}%
  }%
}
\newcommand\start@PD[5]{%
  #1\mathopen{\mathpalette\put@delim@helper{\put@delim{#2}{#3}{.}{#5}}}%
  #5%
  \mathclose{\mathpalette\put@delim@helper{\put@delim{#2}{.}{#4}{#5}}}%
}
\newcommand\put@delim@helper[2]{%
  \hbox{$\m@th\nulldelimiterspace=0pt #2#1$}%
}
\newcommand\put@delim[5]{%
  \setbox\z@\hbox{$\m@th#5{#4}$}%
  \setbox\tw@\null
  \ht\tw@\ht\z@ \dp\tw@\dp\z@
  #1#5%
  \left#2\box\tw@\right#3%
}
\MyPairedDelimiter*{\abs}{\lvert}{\rvert}
\MyPairedDelimiter*{\norm}{\lVert}{\rVert}
\MyPairedDelimiter{\set}{\{}{\}}
\newcommand\llim{
\mathchoice{\vcenter{\hbox{${\scriptstyle{-}}$}}}
{\vcenter{\hbox{$\scriptstyle{-}$}}}
{\vcenter{\hbox{$\scriptscriptstyle{-}$}}}
{\vcenter{\hbox{$\scriptscriptstyle{-}$}}}}
\newcommand\rlim{
\mathchoice{\vcenter{\hbox{${\scriptstyle{+}}$}}}
{\vcenter{\hbox{$\scriptstyle{+}$}}}
{\vcenter{\hbox{$\scriptscriptstyle{+}$}}}
{\vcenter{\hbox{$\scriptscriptstyle{+}$}}}}
\theoremstyle{plain} 
\newtheorem{theorem}{Theorem}[section]
\newtheorem{corollary}[theorem]{Corollary}
\newtheorem{lemma}[theorem]{Lemma}
\newtheorem{proposition}[theorem]{Proposition}
\theoremstyle{definition}
\newtheorem{example}[theorem]{Example}
\newtheorem*{definition}{Definition}
\newtheorem{remark}[theorem]{Remark}
\newcommand{\Hmm}[1]{\leavevmode{\marginpar{\tiny%
$\hbox to 0mm{\hspace*{-0.5mm}$\leftarrow$\hss}%
\vcenter{\vrule depth 0.1mm height 0.1mm width \the\marginparwidth}%
\hbox to 0mm{\hss$\rightarrow$\hspace*{-0.5mm}}$\\\relax\raggedright #1}}}
\begin{document}

\medmuskip=4mu plus 2mu minus 3mu
\thickmuskip=5mu plus 3mu minus 1mu
\belowdisplayshortskip=9pt plus 3pt minus 5pt

\title{A quantitative version of Gordon's Theorem for Jacobi and Sturm-Liouville operators}

\author{Christian Seifert}

\date{\today}

\maketitle

\begin{abstract}
  We prove a quantitative version of Gordon's Theorem concerning absence of eigenvalues for Jacobi matrices and Sturm-Liouville operators with complex coefficients.

  \bigskip

  MSC2010: 34L15, 34L40, 81Q12

  \medskip

  Key words: Jacobi matrices, Sturm-Liouville operators, eigenvalue problem, quasiperiodic operators
\end{abstract}

\section{Introduction}
\label{sec:intro}

In this paper we study the absence of eigenvalues for (discrete) Jacobi operators
\[(H_{a,b}u)(n) := a(n+1)u(n+1) + b(n)u(n) + a(n) u(n-1) \quad(n\in\Z)\]
where $a,b\in\ell_\infty(\Z)$ such that $\frac{1}{a(\cdot)}\in \ell_\infty(\Z)$,
and analogously (continuum) Sturm-Liouville operators
\[H_{a,\mu}u := -\partial a\partial u + u\mu,\]
where $a\in L_\infty(\R)$ such that $\frac{1}{a}\in L_\infty(\R)$ and $\mu$ is locally a complex Radon measure (for precise definitions on $\mu$ see Section \ref{sec:SL} below).

Under the assumption that the coefficients of the operators can be (locally) approximated by periodic ones in a suitable sense we prove absence of eigenvalues for these operators.
While in the Jacobi case we will work with $\ell_\infty$-approximation, for the Sturm-Liouville case we
use approximation in $L_1(\R)$ for $a$ and a weak Wasserstein-type metric for $\mu$.
Controlling the approximation rate in a quantitative way we obtain lower bounds on the modulus of eigenvalues, where the bound is determined by the (norms of the) coefficents of the operator and the approximation rate.

In this way, our result may be seen as a quantitative version of Gordon's Theorem, which first appeared in \cite{Gordon1976}, and since then was subsequently generalized \cite{Gordon1986,Damanik2000,DamanikStolz2000, Damanik2004, Seifert2011, Seifert2012, Krueger2013, SeifertVogt2014}.
However, all the previous results stick to the case of (discrete or continuum) Schr\"odinger operators, and except of \cite{SeifertVogt2014} to a qualitative statement in the self-adjoint case.
The first quantitative result appeared in \cite{SeifertVogt2014} for not necessarily self-adjoint continuum Schr\"odinger operators on $L_2(\R)$, where the eigenvalue bound is proven to be sharp.

We will not only consider the operators in $\ell_2(\Z)$ and $L_2(\R)$, respectively, but in $c_0(\Z)$ and $C_0(\R)$, so that we in turn obtain eigenvalue bounds also for the whole $\ell_p(\Z)$ and $L_p(\R)$ scale for $1\leq p<\infty$.
Note that all the sequence and function spaces we will work with are complex-valued, so that the coefficients may be complex, thus obtaining (in general) non-self-adjoint operators.

The paper is organised as follows. In Section \ref{sec:Jacobi} we deal with the discrete Jacobi case, where we also provide an example for quasiperiodic coefficients.
Section \ref{sec:SL} is devoted to the continuum Sturm-Liouville case. Here, we will also comment on optimality of our eigenvalue bound. In the appendix we provide discrete and continuum versions of Gronwall's lemma and a short lemma on norms of $SL(2,\C)$-matrices.

\section{The Jacobi case}
\label{sec:Jacobi}

For sequences $a,b\in\ell_\infty(\Z)$ such that $\frac{1}{a(\cdot)}\in \ell_\infty(\Z)$ we consider the Jacobi matrix $H_{a,b}\from c_0(\Z)\to c_0(\Z)$, where $c_0(\Z)$ is the space of complex sequences with index set $\Z$ converging to $0$ at $\pm\infty$, defined by
\[(H_{a,b}u)(n) := a(n+1)u(n+1) + b(n)u(n) + a(n) u(n-1) \quad(n\in\Z).\]
We are going to prove the following theorem.

\begin{theorem}
\label{thm:Jacobi}
  Let $a,b\in\ell_\infty(\Z)$ such that $\frac{1}{a(\cdot)}\in \ell_\infty(\Z)$.
  Assume there exists a sequence $(p_m)$ in $\N$ with $p_m\to \infty$ and $C>0$ such that
  \begin{equation}
    \label{eq:Jacobi_exp}
    e^{Cp_m} \max_{k=-p_m+1,\ldots,p_m} \bigl(\abs{a(k+1) - a(k+1+p_m)}+ \abs{b(k) - b(k+p_m)}\bigr)\to 0.  
  \end{equation}  
  Then $H_{a,b}$ does not have any eigenvalues with modulus less than $\frac{e^{C/2}-\norm{a}_\infty}{\norm{\frac{1}{a}}_\infty} + \norm{b}_\infty-1$.
\end{theorem}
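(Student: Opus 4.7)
The plan is to run a Gordon-type argument adapted to the Jacobi setting. Suppose, for contradiction, that some $\lambda\in\C$ with
\[
|\lambda| < \frac{e^{C/2}-\|a\|_\infty}{\|1/a\|_\infty} + \|b\|_\infty - 1
\]
is an eigenvalue and $u\in c_0(\Z)\setminus\{0\}$ an associated eigenvector. First rewrite $H_{a,b}u=\lambda u$ as a first-order recursion $v_n = T_n(\lambda)v_{n-1}$ for a two-component vector $v_n$ built out of consecutive values of $u$ (with a suitable symplectic rescaling, e.g.\ $v_n=(a(n+1)u(n+1),u(n))^\top$, so that $T_n(\lambda)\in\SL(2,\C)$ at every step). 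Because $1/a\in\ell_\infty$, the recursion is uniquely solvable both forward and backward, so $v_n\neq 0$ for every $n$ as soon as $u\not\equiv 0$; after a harmless index shift I may assume $v_0\neq 0$.

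For each $m$, introduce the $p_m$-periodic approximants $a_m,b_m$ that agree with $a,b$ on $\{-p_m+1,\ldots,p_m\}$, the corresponding transfer matrices $\tilde T_n^{(m)}$, and the monodromy $\tilde M_m := \tilde T_{p_m}^{(m)}\cdots\tilde T_1^{(m)}\in\SL(2,\C)$. The central analytical input is a per-step norm bound: a direct calculation on the entries of $T_n(\lambda)$, combined with the short $\SL(2,\C)$-lemma from the appendix (to translate between operator and Frobenius norms), shows that the threshold on $|\lambda|$ in the statement is exactly what is needed to guarantee $\|T_n(\lambda)\|\leq e^{C/2}$ for every $n$. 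Submultiplicativity then yields $\|\prod_{k=1}^{jp_m} T_k(\lambda)\|\leq e^{Cjp_m/2}$ for $j=1,2$, and the same bound for the periodic approximants. A telescoping estimate (equivalently the discrete Gronwall lemma from the appendix) then gives
\[
\Bigl\|\prod_{k=1}^{2p_m} T_k(\lambda) - \tilde M_m^{\,2}\Bigr\|\ \leq\ 2 p_m\, e^{C(2p_m-1)/2}\max_{k}\bigl(|a(k+1)-a(k+1+p_m)|+|b(k)-b(k+p_m)|\bigr),
\]
which vanishes as $m\to\infty$ by hypothesis \eqref{eq:Jacobi_exp}. By the matching-window construction, $\tilde M_m v_0 = v_{p_m}$ and $\tilde M_m^{-1} v_0 = v_{-p_m}$ hold exactly, with no approximation error.

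The final step applies Cayley--Hamilton to $\tilde M_m\in\SL(2,\C)$. From $\tilde M_m^{2}-(\tr\tilde M_m)\tilde M_m+\Id=0$ together with $\tilde M_m+\tilde M_m^{-1}=(\tr\tilde M_m)\Id$, a dichotomy on $|\tr\tilde M_m|\leq 1$ versus $|\tr\tilde M_m|>1$ yields the Gordon-type inequality
\[
\max\bigl(\|\tilde M_m^{\,2}v_0\|,\ \|\tilde M_m v_0\|,\ \|\tilde M_m^{-1}v_0\|\bigr)\ \geq\ \tfrac{1}{2}\|v_0\|.
\]
The right-hand side is a positive constant, while each term on the left is either exactly $\|v_{\pm p_m}\|$ or differs from $\|v_{2p_m}\|$ by the telescoping error controlled above; since $u\in c_0(\Z)$, all three quantities tend to $0$ as $m\to\infty$. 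This contradicts $v_0\neq 0$ and concludes the argument.

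The main obstacle I expect is the calibration in step two: choosing the transfer-matrix normalization so that simultaneously the periodic monodromy genuinely lies in $\SL(2,\C)$ (making Cayley--Hamilton available) and the per-step bound $\|T_n(\lambda)\|\leq e^{C/2}$ matches exactly the explicit $|\lambda|$-threshold in the statement, with the precise shape $(e^{C/2}-\|a\|_\infty)/\|1/a\|_\infty + \|b\|_\infty - 1$. Once these matching constants are in place, the telescoping bound and the Cayley--Hamilton dichotomy are standard, and the remainder is bookkeeping.
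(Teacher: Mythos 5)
Your architecture --- transfer matrices, $p_m$-periodic approximants, a telescoping/Gronwall perturbation bound, and the Cayley--Hamilton three-point inequality for the $\SL(2,\C)$ monodromy --- is exactly the paper's. There is, however, one concrete misstep. A $p_m$-periodic sequence cannot agree with a general $a,b$ on the window $\set{-p_m+1,\ldots,p_m}$, which has length $2p_m$; the approximants can only be made to copy one period, say $\set{1,\ldots,p_m}$ (this is what the paper does). Hence your assertion that both $\tilde M_m v_0=v_{p_m}$ and $\tilde M_m^{-1}v_0=v_{-p_m}$ hold ``exactly, with no approximation error'' is false: only the first is exact. The quantity $\tilde M_m^{-1}v_0$ differs from $v_{-p_m}$ by an error that must be estimated just like $\tilde M_m^{2}v_0$ versus $v_{2p_m}$ --- indeed the hypothesis \eqref{eq:Jacobi_exp} runs over $k=-p_m+1,\ldots,p_m$ precisely so that the indices $k\le 0$ control this backward window while the indices $k\ge 1$ control the forward one. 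The fix is to run your telescoping estimate also in the negative direction with the inverse one-step matrices; this is where the appendix lemma $\norm{A}=\norm{A^{-1}}$ for $A\in\SL(2,\C)$ is actually needed (you instead invoke it ``to translate between operator and Frobenius norms'', which is not its content; the paper obtains its growth bound from the interpolation $\norm{T}\le\sqrt{\norm{T}_{1\to1}\norm{T}_{\infty\to\infty}}$). This two-sided estimate is the content of the paper's Lemma~\ref{lem:Jacobi_diff}, which treats $n>0$ and $n\le0$ separately.

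A second point you defer rather than resolve is the calibration of the threshold, which you correctly identify as the crux. What is needed is that the one-step bound $\norm{\frac{1}{a}}_\infty\norm{z-b}_\infty+\norm{\frac{1}{a}}_\infty+\norm{a}_\infty$ be strictly less than $e^{C/2}$, so that the product of $2p_m$ such factors is beaten by $e^{Cp_m}$. Via $\norm{z-b}_\infty\le\abs{z}+\norm{b}_\infty$ this amounts to $\abs{z}<\bigl(e^{C/2}-\norm{a}_\infty\bigr)\norm{\frac{1}{a}}_\infty^{-1}-\norm{b}_\infty-1$; reconciling the sign of the $\norm{b}_\infty$ term with the threshold as stated in the theorem is exactly the bookkeeping you postponed, and it is not automatic. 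With the backward error estimate supplied and this calibration carried out, your argument closes and coincides with the paper's proof.
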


The condition \eqref{eq:Jacobi_exp} in the preceding theorem states that the difference of the three pieces 
$\bigl((a(n+1),b(n))\bigr)_{n\in\set{-p_m+1,\ldots,0}}$, $\bigl((a(n+1),b(n))\bigr)_{n\in\set{1,\ldots,p_m}}$ and $\bigl((a(n+1),b(n))\bigr)_{n\in\set{p_m+1,\ldots,2p_m}}$
tends to zero (as $m\to \infty$) faster than the exponential $e^{-Cp_m}$. This may be seen as a quantitative (in the sense of the exponential rate) version of Gordon's condition for potentials of discrete Schr\"odinger operators, see e.g.\ \cite{Gordon1976}.
In case condition \eqref{eq:Jacobi_exp} holds true for all $C>0$, we obtain an analogue of Gordon's theorem \cite{Gordon1976} for Jacobi matices.

\begin{corollary}
  Let $a,b\in\ell_\infty(\Z)$ such that $\frac{1}{a(\cdot)}\in \ell_\infty(\Z)$.
  Assume that condition \eqref{eq:Jacobi_exp} holds true for all $C>0$. Then $H_{a,b}$ does not have any eigenvalues.
\end{corollary}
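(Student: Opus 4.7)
The proof is essentially just a limiting argument on top of Theorem \ref{thm:Jacobi}, so I would keep it short. The plan is as follows.

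Fix an arbitrary $C>0$. By hypothesis, condition \eqref{eq:Jacobi_exp} holds for this $C$ with some sequence $(p_m)$ in $\N$ tending to infinity. Therefore, Theorem \ref{thm:Jacobi} applies and rules out any eigenvalue of $H_{a,b}$ with modulus strictly less than
\[
r(C) := \frac{e^{C/2}-\norm{a}_\infty}{\norm{\tfrac{1}{a}}_\infty}+\norm{b}_\infty-1.
\]

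Now I would observe that $\norm{\tfrac{1}{a}}_\infty\in(0,\infty)$ and $\norm{a}_\infty,\norm{b}_\infty$ are finite constants independent of $C$, so that $r(C)\to\infty$ as $C\to\infty$. Hence, if $\lambda$ were an eigenvalue of $H_{a,b}$, applying the bound above to every $C>0$ would force $\abs{\lambda}\ge r(C)$ for all $C>0$, which is a contradiction. Consequently $H_{a,b}$ has no eigenvalues at all.

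I do not expect any serious obstacle here: the only point worth double-checking is the admissible interpretation of the phrase ``condition \eqref{eq:Jacobi_exp} holds true for all $C>0$''. Whether this is read as one single sequence $(p_m)$ that works simultaneously for every $C$, or as a separate sequence $(p_m^C)$ for each $C$, the argument is the same, because Theorem \ref{thm:Jacobi} only needs the existence of a sequence for the chosen $C$; we then let $C\to\infty$ and exploit the divergence of $r(C)$. No quantitative refinement is required beyond what Theorem \ref{thm:Jacobi} already provides.
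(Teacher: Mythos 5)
Your argument is correct and is precisely the intended one: the paper states this corollary without proof as an immediate consequence of Theorem \ref{thm:Jacobi}, obtained by letting $C\to\infty$ and noting that the eigenvalue bound $\frac{e^{C/2}-\norm{a}_\infty}{\norm{\frac{1}{a}}_\infty}+\norm{b}_\infty-1$ diverges. Your remark about the quantifier on $(p_m)$ is also apt and does not affect the conclusion.
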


Note that since $\ell_p(\Z)\subseteq c_0(\Z)$ for all $1\leq p<\infty$, we obtain the same result for $H_{a,b}$ considered as an operator in $\ell_p(\Z)$ with $1\leq p<\infty$.

\begin{corollary}
   Let $1\leq p<\infty$, $a,b\in\ell_\infty(\Z)$ such that $\frac{1}{a(\cdot)}\in \ell_\infty(\Z)$.
   Assume that \eqref{eq:Jacobi_exp} is satisfied. Then $H_{a,b}\from 
\ell_p(\Z)\to\ell_p(\Z)$ does not have any eigenvalues with modulus less 
than $\frac{e^{C/2}-\norm{a}_\infty}{\norm{\frac{1}{a}}_\infty} + 
\norm{b}_\infty-1$.
   In case \eqref{eq:Jacobi_exp} holds true for all $C>0$, then $H_{a,b}$ does not have any eigenvalues.
\end{corollary}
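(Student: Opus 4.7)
The plan is to reduce the $\ell_p$ statement directly to Theorem~\ref{thm:Jacobi} by exploiting the embedding $\ell_p(\Z)\subseteq c_0(\Z)$, which holds for every $1\leq p<\infty$. Since $a,b\in\ell_\infty(\Z)$, the operator $H_{a,b}$ maps $\ell_p(\Z)$ into itself (it is bounded by a routine $\ell_p$ estimate using $\norm{a}_\infty$ and $\norm{b}_\infty$), so it makes sense to speak of its eigenvalues as an operator in $\ell_p(\Z)$.

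Suppose $\lambda\in\C$ is an eigenvalue of $H_{a,b}\colon\ell_p(\Z)\to\ell_p(\Z)$ with eigenvector $u\in\ell_p(\Z)\setminus\set{0}$. First I would note that, viewed pointwise, the eigenvalue equation $a(n+1)u(n+1)+b(n)u(n)+a(n)u(n-1)=\lambda u(n)$ does not depend on which Banach space we regard $u$ as living in. Since $\ell_p(\Z)\subseteq c_0(\Z)$, the same $u$ lies in $c_0(\Z)$ and is nonzero there, hence $\lambda$ is also an eigenvalue of $H_{a,b}\colon c_0(\Z)\to c_0(\Z)$.

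Now I would invoke Theorem~\ref{thm:Jacobi} under the standing assumption~\eqref{eq:Jacobi_exp}: it forces
\[\abs{\lambda}\geq \frac{e^{C/2}-\norm{a}_\infty}{\norm{\frac{1}{a}}_\infty} + \norm{b}_\infty-1,\]
which is precisely the bound claimed for the $\ell_p$ operator. For the second assertion, if \eqref{eq:Jacobi_exp} is satisfied for every $C>0$, the lower bound above can be made arbitrarily large, ruling out any eigenvalue whatsoever; equivalently, one may quote the preceding corollary (the Gordon-type statement in $c_0(\Z)$) and push it through the same embedding.

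The only potential obstacle is verifying that passing to the larger space $c_0(\Z)$ is legitimate, but this is immediate once one observes that eigenvalue problems for Jacobi matrices are purely pointwise and the ambient Banach space merely specifies a decay class of admissible solutions, with $\ell_p$ imposing a strictly stronger decay than $c_0$. No additional quantitative estimate is needed, so the proof is essentially a one-line reduction to Theorem~\ref{thm:Jacobi}.
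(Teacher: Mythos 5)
Your argument is exactly the paper's: the corollary is deduced from Theorem~\ref{thm:Jacobi} via the inclusion $\ell_p(\Z)\subseteq c_0(\Z)$, since an $\ell_p$-eigenvector is a fortiori a nonzero $c_0$-solution of the same pointwise difference equation. The proof is correct and matches the paper's one-line reduction.
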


\begin{remark}
  Such a result cannot hold in $\ell_\infty(\Z)$, since periodic Jacobi matrices (i.e., Jacobi matrices with periodic sequences $a$ and $b$) have periodic eigensolutions (which are therefore in $\ell_\infty(\Z)$).
\end{remark}

We will split the proof into several lemmas.

\begin{lemma}
  Let $a,b\in\ell_\infty(\Z)$, $a(n)\neq 0$ for all $n\in\Z$, $z\in\C$ and 
$u\from \Z\to \C$. The following are equivalent:
  \begin{enumerate}
    \item
    $u$ is a solution of the difference equation
    \[a(n+1) u(n+1) + b(n) u(n) + a(n) u(n-1) - z u(n) = 0 \quad(n\in\Z).\]
    \item
    For $m,n\in\Z$ we have
    \[\begin{pmatrix} u(m+1)\\ a(m+1)u(m)\end{pmatrix} = T_z(m,n)\begin{pmatrix} u(n+1)\\ a(n+1)u(n)\end{pmatrix},\]
    where
    \[T_z(m,n) = \begin{cases}
		  M_z(m-1)\cdots M_z(n+1)M_z(n) & m>n,\\
		  \Id & m=n,\\
		  M_z(m)^{-1}\cdots M_z(n-2)^{-1}M_z(n-1)^{-1} & m<n,
                 \end{cases}\]
    and
    \[M_z(n) = \begin{pmatrix}
      \frac{z-b(n+1)}{a(n+2)} & -\frac{1}{a(n+2)} \\
      a(n+2) & 0
    \end{pmatrix}.\]                 
  \end{enumerate}
\end{lemma}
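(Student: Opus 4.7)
The lemma is a standard transfer-matrix reformulation, and the proof should essentially be a direct computation. The approach is to verify that the one-step matrix $M_z(n)$ encodes the three-term recursion, and then iterate.

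First I would show (a) $\Rightarrow$ (b). Rearranging the difference equation at index $n+1$ gives
\[u(n+2) = \tfrac{z-b(n+1)}{a(n+2)} u(n+1) - \tfrac{1}{a(n+2)} \bigl(a(n+1) u(n)\bigr),\]
which, together with the trivial identity $a(n+2)u(n+1) = a(n+2)\cdot u(n+1)$, is precisely the statement
\[\begin{pmatrix} u(n+2)\\ a(n+2)u(n+1)\end{pmatrix} = M_z(n)\begin{pmatrix} u(n+1)\\ a(n+1)u(n)\end{pmatrix}.\]
This handles the one-step case $m = n+1$. An induction on $m-n$ for $m>n$ then yields the case $m>n$ with $T_z(m,n) = M_z(m-1)\cdots M_z(n)$.

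For the case $m<n$ I would first check that each $M_z(n)$ is invertible, in fact lies in $\SL(2,\C)$: a direct computation gives
\[\det M_z(n) = \tfrac{z-b(n+1)}{a(n+2)}\cdot 0 - \bigl(-\tfrac{1}{a(n+2)}\bigr)\cdot a(n+2) = 1,\]
using that $a(n+2)\neq 0$. Thus one may invert the one-step relation and iterate backwards to obtain the product of inverses stated in the lemma. The case $m=n$ is trivial.

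The direction (b) $\Rightarrow$ (a) follows by specialising the matrix identity to $m=n+1$, reading off the first component, and multiplying through by $a(n+2)$ to recover the three-term recursion (with the index shift $n \to n+1$, which is irrelevant since $n$ is arbitrary). There is no real obstacle here; the only thing to be mildly careful about is bookkeeping of the index shifts in $M_z(n)$ (which involves $b(n+1)$ and $a(n+2)$ rather than $b(n)$ and $a(n+1)$), and the fact that the composition order in $T_z(m,n)$ for $m>n$ is the correct one so that iteration from $n$ up to $m$ produces $M_z(m-1)\cdots M_z(n)$ rather than the reverse.
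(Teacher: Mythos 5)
Your proposal is correct and follows essentially the same route as the paper: verify the one-step relation $\bigl(u(n+2),\,a(n+2)u(n+1)\bigr)^\top = M_z(n)\bigl(u(n+1),\,a(n+1)u(n)\bigr)^\top$ from the recursion at index $n+1$, iterate (forwards and, using $\det M_z(n)=1$, backwards) for (a)$\Rightarrow$(b), and read off the first component of the $m=n+1$ case for (b)$\Rightarrow$(a). Your explicit determinant check and induction are if anything slightly more careful than the paper's write-up.
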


\begin{proof}
  ``(a) $\Rightarrow$ (b)'':
  Fix $m,n\in\Z$ and let
  \[T_z(m,n)\from \begin{pmatrix} u(n+1)\\ a(n+1)u(n)\end{pmatrix} \to \begin{pmatrix} u(m+1)\\ a(m+1)u(m)\end{pmatrix}.\]
  Then $T_z(m,n)$ is linear and thus can be represented by a matrix (which we will also denote by $T_z(m,n)$).
  Since $u$ satisfies the difference equation, we compute
  \[\begin{pmatrix} 
      u(n+2) \\ 
      a(n+2)u(n+1)
    \end{pmatrix} = 
    \underbrace{\begin{pmatrix}
      \frac{z-b(n+1)}{a(n+2)} & -\frac{1}{a(n+2)} \\
      a(n+2) & 0
    \end{pmatrix}}_{=M_z(n)}
    \begin{pmatrix} 
      u(n+1)\\
      a(n+1)u(n)
     \end{pmatrix}.\]
    Thus, 
    \[T_z(m,n) = \begin{cases}
		  M_z(m-1)\cdots M_z(n+1)M_z(n) & m>n,\\
		  \Id & m=n,\\
		  M_z(m)^{-1}\cdots M_z(n-2)^{-1}M_z(n-1)^{-1} & m<n.
                 \end{cases}\]

  ``(b) $\Rightarrow$ (a)'':
  A direct computation yields 
  \begin{align*}
    & T_z(n+1,n) \begin{pmatrix} u(n+1)\\ a(n+1)u(n)\end{pmatrix} \\
    & = \begin{pmatrix} \frac{1}{a(n+2)} \bigl(zu(n+1) - b(n+1)u(n+1) - a(n+1)u(n)\bigr) \\ a(n+1)u(n) \end{pmatrix}.
  \end{align*}
  Thus,
  \[u(n+2) = \frac{1}{a(n+2)} \bigl(zu(n+1) - b(n+1)u(n+1) - a(n+1)u(n)\bigr),\]
  which means that $u$ satisfies the difference equation.  
\end{proof}

Note that $\det T_z(m,n) = 1$ for all $m,n\in\Z$. Furthermore, $T_z(m,n)$ ``depends locally'' on $a$ and $b$, i.e.\ $T_z(m,n)$ depends only on $a(k+1)$ and $b(k)$ for $k\in\{\min\{m,n\}+1,\ldots,\max\{m,n\}\}$. 

\begin{lemma}
\label{lem:Jacobi_per}
  Let $a,b\in\ell_\infty(\Z)$ be $p$-periodic, $a(n)\neq 0$ for all $n\in\Z$, $z\in\C$, $u$ a solution of the difference equation
  \[a(n+1) u(n+1) + b(n) u(n) + a(n) u(n-1) - z u(n) = 0 \quad(n\in\Z).\]
  Then
  \[\max\set{\norm{\begin{pmatrix}
		    u(n+1) \\ a(n+1) u(n)
                   \end{pmatrix}};\; n\in\set{-p,p,2p}} \geq \frac{1}{2} \norm{\begin{pmatrix}
		    u(1) \\ a(1) u(0)
                   \end{pmatrix}}.\] 
\end{lemma}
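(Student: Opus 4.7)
The plan is to reduce the bound to a statement about a single $SL(2,\C)$-matrix (the monodromy) and then exploit Cayley--Hamilton. First I would set $v := \bigl(\begin{smallmatrix} u(1) \\ a(1)u(0) \end{smallmatrix}\bigr)$ and $A := T_z(p,0)$. Using the $p$-periodicity of $a$ and $b$, the single-step matrices $M_z(\cdot)$ are $p$-periodic, so the product formula for $T_z$ in the previous lemma yields $T_z(2p,p) = T_z(p,0) = A$ and hence $T_z(2p,0) = A^2$, while reading the $m<n$ branch of the formula similarly gives $T_z(-p,0) = A^{-1}$. Therefore the three candidate vectors in the maximum are exactly $A^{-1}v$, $Av$, and $A^2 v$.

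Next I would invoke the remark after the previous lemma that $\det T_z(m,n) = 1$, so $\det A = 1$. By Cayley--Hamilton, $A$ satisfies
\[
A^2 - (\tr A)\,A + I = 0,
\]
from which two useful identities follow by multiplying on the right by $A^{-1}$, respectively by rewriting:
\[
A + A^{-1} = (\tr A)\,I, \qquad A^2 + I = (\tr A)\,A.
\]
Applying both identities to the vector $v$ and taking norms yields
\[
\abs{\tr A}\cdot\norm{v} \le \norm{Av} + \norm{A^{-1}v}, \qquad \norm{v} \le \abs{\tr A}\cdot\norm{Av} + \norm{A^2 v},
\]
the second by the reverse triangle inequality $\norm{v} \le \norm{(A^2+I)v} + \norm{A^2 v}$.

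Now I would split according to the size of $\abs{\tr A}$. If $\abs{\tr A}\ge 1$, the first inequality gives $\norm{v} \le \norm{Av}+\norm{A^{-1}v} \le 2M$, where $M$ denotes the maximum in the statement. If instead $\abs{\tr A}<1$, the second inequality gives $\norm{v} \le \norm{Av}+\norm{A^2 v}\le 2M$. In either case $M\ge\tfrac12\norm{v}$, which is the claim.

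I do not expect any serious obstacle: the main point is simply to recognise the three transfer matrices as $A^{-1},A,A^2$ (which is immediate from periodicity), after which the lemma is a short Cayley--Hamilton argument that relies only on $\det A = 1$ and a two-case split on $\abs{\tr A}$. The only mildly delicate bookkeeping is checking the indices in the $m<n$ branch of $T_z(m,n)$ to confirm that the $p$-shift in $M_z$ really identifies $T_z(-p,0)$ with $A^{-1}$.
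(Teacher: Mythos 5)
Your proposal is correct and follows essentially the same route as the paper: identify the three vectors as $A^{-1}v$, $Av$, $A^2v$ for the monodromy matrix $A=T_z(p,0)$ (which by periodicity equals $T_z(2p,p)$ and $T_z(0,-p)$), apply Cayley--Hamilton with $\det A=1$, and split on $\abs{\tr A}$ using the two identities $A+A^{-1}=(\tr A)I$ and $A^2+I=(\tr A)A$. The paper organises the two cases in the same way (using $\{p,2p\}$ when $\abs{\tr A}\le 1$ and $\{-p,p\}$ otherwise), so there is nothing to add.
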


\begin{proof}
  By periodicity of $a$ and $b$ we have $T_z(p,0) = T_z(2p,p) = T_z(0,-p) =: T$.
  The Cayley-Hamilton Theorem yields (since $\det T = 1$)
  \[T^2  - (\tr T)T + \Id = 0.\]
  In case $\abs{\tr T}\leq 1$, applying this equation to $(u(1),a(1)u(1))^\top$ we observe
  \[\begin{pmatrix} u(2p+1) \\ a(2p+1) u(2p) \end{pmatrix} - \tr(T) \begin{pmatrix} u(p+1) \\ a(p+1) u(p) \end{pmatrix} = - \begin{pmatrix} u(1) \\ a(1) u(0) \end{pmatrix},\]
  and therefore
  \[\max\set{\norm{\begin{pmatrix}
		    u(n+1) \\ a(n+1) u(n)
                   \end{pmatrix}};\; n\in\set{p,2p}} \geq \frac{1}{2} \norm{\begin{pmatrix}
		    u(1) \\ a(1) u(0)
                   \end{pmatrix}}.\] 
  If $\abs{\tr(T)}>1$ we apply the equation to $(u(-p+1),a(-p+1)u(-p))^\top$ to get
  \[\begin{pmatrix} u(p+1) \\ a(p+1) u(p) \end{pmatrix} + \begin{pmatrix} u(-p+1) \\ a(-p+1) u(-p) \end{pmatrix} = \tr(T) \begin{pmatrix} u(1) \\ a(1) u(0) \end{pmatrix}.\]
  Thus,
  \[\max\set{\norm{\begin{pmatrix}
		    u(n+1) \\ a(n+1) u(n)
                   \end{pmatrix}};\; n\in\set{-p,p}} \geq \frac{1}{2} \norm{\begin{pmatrix}
		    u(1) \\ a(1) u(0)
                   \end{pmatrix}}. \qedhere\]  
\end{proof}

\begin{lemma}
\label{lem:Jacobi_diff}
  Let $a,\tilde{a},b,\tilde{b}\in\ell_\infty(\Z)$, $a(n),\tilde{a}(n)\neq 0$ for all $n\in\Z$, $z\in\C$, $u,\tilde{u}$ solutions of the difference equations
  \begin{align*}
    a(n+1) u(n+1) + b(n) u(n) + a(n) u(n-1) - z u(n) & = 0 \quad(n\in\Z),\\
    \tilde{a}(n+1) \tilde{u}(n+1) + \tilde{b}(n) \tilde{u}(n) + \tilde{a}(n) \tilde{u}(n-1) - z \tilde{u}(n) & = 0 \quad(n\in\Z),
  \end{align*}
  respectively, satisfying
  \[\begin{pmatrix}
      u(1)\\a(1)u(0)
    \end{pmatrix} = \begin{pmatrix}
      \tilde{u}(1)\\\tilde{a}(1)\tilde{u}(0)
    \end{pmatrix}.\]
  Then, for $n\in\Z$, we have
  \begin{align*}
    & \norm{\begin{pmatrix}
      u(n+1) - \tilde{u}(n+1)\\a(n+1)u(n) - \tilde{a}(n+1)\tilde{u}(n)
    \end{pmatrix}}\\
    & \leq \prod_{k=\min\{n+1,1\}}^{\max\{0,n\}} \norm{A(k)} \sum_{k=\min\{n+1,1\}}^{\max\{0,n\}} \norm{B(k)} \norm{\begin{pmatrix} \tilde{u}(k+\frac{1-\sgn n}{2}) \\ \tilde{a}(k+\frac{1-\sgn n}{2})\tilde{u}(k-\frac{1+\sgn n}{2})\end{pmatrix}},
    \end{align*}
    where
    \begin{align*}
      A(n) & = \begin{pmatrix}
      \frac{z-b(n)}{a(n+1)} & - \frac{1}{a(n+1)} \\
      a(n+1) & 0
    \end{pmatrix},\\
     B(n) & = \begin{pmatrix} 
	\left(\frac{z-b(n)}{a(n+1)} - \frac{z-\tilde{b}(n)}{\tilde{a}(n+1)}\right) & \left(- \frac{1}{a(n+1)} +\frac{1}{\tilde{a}(n+1)}\right) \\
	a(n+1) - \tilde{a}(n+1) & 0
      \end{pmatrix} \quad(n\in\Z).
     \end{align*}
\end{lemma}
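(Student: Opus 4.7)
The plan is to rewrite both state vectors as iterated applications of transfer matrices to the common initial vector $v_0 := (u(1),a(1)u(0))^\top = (\tilde{u}(1),\tilde{a}(1)\tilde{u}(0))^\top$, and then expand the difference by a standard telescoping identity. The matrix $A(n)$ in the statement is precisely $M_z(n-1)$ from the preceding lemma after a trivial index shift, so with $v_n := (u(n+1),a(n+1)u(n))^\top$ and analogously $\tilde{v}_n$ we have $A(n)v_{n-1}=v_n$ and $\tilde{A}(n)\tilde{v}_{n-1}=\tilde{v}_n$. A direct inspection of the explicit entries gives $B(n) = A(n) - \tilde{A}(n)$.

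For $n\geq 1$, the standard telescoping identity
\[A(n)\cdots A(1) - \tilde{A}(n)\cdots\tilde{A}(1) = \sum_{k=1}^{n} A(n)\cdots A(k+1)\,B(k)\,\tilde{A}(k-1)\cdots \tilde{A}(1),\]
applied to $v_0$, yields $v_n - \tilde{v}_n = \sum_{k=1}^{n} A(n)\cdots A(k+1)\,B(k)\,\tilde{v}_{k-1}$. Submultiplicativity of the operator norm then gives
\[\norm{v_n - \tilde{v}_n} \leq \sum_{k=1}^{n} \norm{A(n)}\cdots\norm{A(k+1)} \norm{B(k)} \norm{\tilde{v}_{k-1}}.\]
Since $\det A(k) = 1$, i.e.\ $A(k) \in SL(2,\C)$, the appendix lemma on $SL(2,\C)$-norms supplies $\norm{A(k)} \geq 1$, so every partial product is majorised by the full product $\prod_{j=1}^{n}\norm{A(j)}$, which can then be factored outside the sum to give exactly the claimed bound.

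For $n\leq -1$, I would run the same argument on the inverse representation $v_n = A(n+1)^{-1}\cdots A(0)^{-1}v_0$ (and analogously for $\tilde{v}_n$), now telescoping via $A(k)^{-1}-\tilde{A}(k)^{-1} = -A(k)^{-1}B(k)\tilde{A}(k)^{-1}$. The appendix lemma further gives $\norm{A(k)^{-1}} = \norm{A(k)}$ for $SL(2,\C)$-matrices, so after replacing each partial product of inverse norms by the full product $\prod_{k=n+1}^{0}\norm{A(k)}$ and reindexing the summation, one lands on the form written in the statement. The case $n=0$ is trivial, both sides being zero.

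The only real source of friction is bookkeeping: unifying the two telescoping directions through the $\sgn n$ expressions in the statement and matching, for each $B(k)$-term, the precise partial $\tilde{u}$-state vector on which it acts.
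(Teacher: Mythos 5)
Your strategy --- telescoping the product of transfer matrices --- is in substance the same as the paper's, which derives the one-step recursion $d_{n+1}=A(n+1)d_n+B(n+1)\tilde v_n$ for the difference $d_n:=v_n-\tilde v_n$ (with $v_n=(u(n+1),a(n+1)u(n))^\top$, $\tilde v_n$ analogous) and then invokes the discrete Gronwall lemma of the appendix; unwinding that Gronwall bound reproduces exactly your telescoping sum, and both arguments need $\norm{A(k)}\geq 1$ (which follows from $\det A(k)=1$) to replace each partial product by the full one. For $n\geq 1$ and for $n=0$ your argument is correct and gives precisely the stated inequality.

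The backward case is where the ``bookkeeping friction'' you mention is an actual gap, not just friction. With your factorization $A(k)^{-1}-\tilde A(k)^{-1}=-A(k)^{-1}B(k)\tilde A(k)^{-1}$, the factor $\tilde A(k)^{-1}$ is unavoidably absorbed into the tilde state vector, so the $k$-th term of your telescoping sum is $-A(n+1)^{-1}\cdots A(k)^{-1}B(k)\,\tilde v_{k-1}$ with $\tilde v_{k-1}=(\tilde u(k),\tilde a(k)\tilde u(k-1))^\top$, whereas for $n<0$ the lemma (note $\tfrac{1-\sgn n}{2}=1$ there) requires $B(k)$ to act on $\tilde v_k=(\tilde u(k+1),\tilde a(k+1)\tilde u(k))^\top$. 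Already for $n=-1$ your route yields $\norm{d_{-1}}\leq\norm{A(0)}\norm{B(0)}\norm{\tilde v_{-1}}$ while the statement reads $\norm{d_{-1}}\leq\norm{A(0)}\norm{B(0)}\norm{\tilde v_{0}}$; these are different inequalities and neither implies the other. The alternative factorization $-\tilde A(k)^{-1}B(k)A(k)^{-1}$ does not help ($B(k)$ then hits the non-tilde vector), and keeping $\tilde A(k)^{-1}$ attached to $B(k)$ costs an extra factor $\norm{\tilde A(k)}$ that is absent from the statement. The paper sidesteps this by computing the backward one-step map directly, $d_{n-1}=A(n)^{-1}d_n+B'(n)\tilde v_n$, where $B'(n)$ is a row/column rearrangement of $B(n)$ with $\norm{B'(n)}=\norm{B(n)}$, so the inhomogeneity acts on $\tilde v_n$ as required. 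Your shifted bound would still serve in the proof of Theorem \ref{thm:Jacobi}, since only the uniform exponential bound on $\norm{\tilde v_j}$ is used there, but as written your $n\leq -1$ argument does not prove the inequality of the lemma.
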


\begin{proof}
  First, assume that $n> 0$.
  We have
  \begin{align*}
    & \begin{pmatrix}
      u(n+1) - \tilde{u}(n+1)\\a(n+1)u(n) - \tilde{a}(n+1)\tilde{u}(n)
    \end{pmatrix} \\
    & = \begin{pmatrix}
      \frac{z-b(n)}{a(n+1)} u(n) - \frac{1}{a(n+1)} a(n) u(n-1) - \frac{z-\tilde{b}(n)}{\tilde{a}(n+1)} \tilde{u}(n) + \frac{1}{\tilde{a}(n+1)} \tilde{a}(n) \tilde{u}(n-1) \\
      a(n+1) u(n) - \tilde{a}(n+1) \tilde{u}(n) 
    \end{pmatrix} \\
    & = \underbrace{\begin{pmatrix}
      \frac{z-b(n)}{a(n+1)} & - \frac{1}{a(n+1)} \\
      a(n+1) & 0
    \end{pmatrix}}_{=A(n)}     
    \begin{pmatrix}
      u(n) - \tilde{u}(n)\\a(n)u(n-1) - \tilde{a}(n)\tilde{u}(n-1)
    \end{pmatrix} \\&\quad
    + \underbrace{\begin{pmatrix} 
	\left(\frac{z-b(n)}{a(n+1)} - \frac{z-\tilde{b}(n)}{\tilde{a}(n+1)}\right) & \left(- \frac{1}{a(n+1)} +\frac{1}{\tilde{a}(n+1)}\right) \\
	a(n+1) - \tilde{a}(n+1) & 0
      \end{pmatrix}}_{=B(n)}     
      \begin{pmatrix} \tilde{u}(n) \\ \tilde{a}(n)\tilde{u}(n-1)\end{pmatrix}.
  \end{align*}
  Thus,
  \begin{align*}
    & \norm{\begin{pmatrix}
      u(n+1) - \tilde{u}(n+1)\\a(n+1)u(n) - \tilde{a}(n+1)\tilde{u}(n)
    \end{pmatrix}} \\
    & \leq \norm{A(n)} \norm{\begin{pmatrix}
      u(n) - \tilde{u}(n)\\a(n)u(n-1) - \tilde{a}(n)\tilde{u}(n-1)
    \end{pmatrix}} + \norm{B(n)}\norm{\begin{pmatrix} \tilde{u}(n) \\ \tilde{a}(n)\tilde{u}(n-1)\end{pmatrix}}.
   \end{align*}
   The Gronwall inequality in Lemma \ref{lem:Gronwall_discrete} yields the assertion for $n> 0$.
    For $n\leq 0$ we similarly obtain
    \begin{align*}
      & \begin{pmatrix}
      u(n) - \tilde{u}(n)\\a(n)u(n-1) - \tilde{a}(n)\tilde{u}(n-1)
    \end{pmatrix} \\
    & = \begin{pmatrix}
      u(n) - \tilde{u}(n) \\
      -a(n+1) u(n+1) + (z-b(n))u(n) + \tilde{a}(n+1) \tilde{u}(n+1) + (z-\tilde{b}(n))\tilde{u}(n) 
    \end{pmatrix} \\
    & = \begin{pmatrix}
      0 & \frac{1}{a(n+1)} \\
      -a(n+1) & \frac{z-b(n)}{a(n+1)}
    \end{pmatrix}  
    \begin{pmatrix}
      u(n+1) - \tilde{u}(n+1)\\a(n+1)u(n) - \tilde{a}(n+1)\tilde{u}(n)
    \end{pmatrix} \\&\quad
    + \begin{pmatrix} 
	 0 & \left( \frac{1}{a(n+1)} -\frac{1}{\tilde{a}(n+1)}\right) \\
	-a(n+1) + \tilde{a}(n+1) & \left(\frac{z-b(n)}{a(n+1)} - \frac{z-\tilde{b}(n)}{\tilde{a}(n+1)}\right)
      \end{pmatrix}    
      \begin{pmatrix} \tilde{u}(n+1) \\ \tilde{a}(n+1)\tilde{u}(n)\end{pmatrix} \\
      & = A(n)^{-1} \begin{pmatrix}
      u(n+1) - \tilde{u}(n+1)\\a(n+1)u(n) - \tilde{a}(n+1)\tilde{u}(n)
    \end{pmatrix} + B(n)^{-1} \begin{pmatrix} \tilde{u}(n+1) \\ \tilde{a}(n+1)\tilde{u}(n)\end{pmatrix}.
    \end{align*}
    Applying again the Gronwall inequality in Lemma \ref{lem:Gronwall_discrete} and taking into account Lemma \ref{lem:norm_unimodular} concludes the proof for $n\leq0$.
    \end{proof}

\begin{proof}[Proof of Theorem \ref{thm:Jacobi}]
  Let $z\in\C$ with $\abs{z}<\frac{e^{C/2}-\norm{a}_\infty}{\norm{\frac{1}{a}}_\infty}+\norm{b}_\infty-1$, and $u$ a solution of the difference equation
  \[a(n+1) u(n+1) + b(n) u(n) + a(n) u(n-1) - z u(n) = 0 \quad(n\in\Z).\]
  Assume that $u\neq 0$; then without loss of generality let
  \[\norm{\begin{pmatrix}
      u(1)\\a(1)u(0)
    \end{pmatrix}} = 1.\]
 Let $u_m$ be the solution of the corresponding difference equation, where $a$ and $b$ are replaced by the $p_m$-periodic versions $a_m$ and $b_m$ with $a_m(k+1) = a(k+1)$ and $b_m(k) = b(k)$ for $k\in\set{1,\ldots,p_m}$, respectively,
    satisfying
  \[\begin{pmatrix}
      u_m(1)\\a_m(1)u_m(0)
    \end{pmatrix} = \begin{pmatrix}
      u(1)\\a(1)u(0)
    \end{pmatrix} .\]
  
  Since for $n\in\Z$ we can estimate
  \[\norm{M_z(n)}_{1\to 1}, \norm{M_z(n)}_{\infty\to\infty} \leq \norm{\frac{1}{a}}_\infty \norm{z-b}_\infty + \norm{\frac{1}{a}}_\infty + \norm{a}_\infty,\]
  we obtain
  \begin{align*}
    \norm{T_z(n,0)} & \leq \sqrt{\norm{T_z(n,0)}_{1\to1}\norm{T_z(n,0)}_{\infty\to\infty}}\\
    & \leq \bigg(\norm{\frac{1}{a}}_\infty\norm{z-b}_\infty + \norm{\frac{1}{a}}_\infty + \norm{a}_\infty\bigg)^{\abs{n}}.
  \end{align*}
  Hence, it follows
  \[\norm{\begin{pmatrix}
      u_m(n+1)\\a_m(n+1)u_m(n)
    \end{pmatrix}}\leq \bigg(\norm{\frac{1}{a}}_\infty\norm{z-b}_\infty + \norm{\frac{1}{a}}_\infty + \norm{a}_\infty\bigg)^{\abs{n}} \quad (n\in\Z,m\in\N).\]  
  We now apply Lemma \ref{lem:Jacobi_diff} with $\tilde{a} = a_m$, $\tilde{b} = b_m$ and $\tilde{u} = u_m$.
  With the notation from this Lemma we obtain (noting that $A(n) = M_z(n)$)
  \[\norm{A(k)}\leq \norm{\frac{1}{a}}_\infty \norm{z-b}_\infty + \norm{\frac{1}{a}}_\infty + \norm{a}_\infty \quad(k\in\Z),\]
  and similarly
  \[\norm{B(k)} \leq \norm{\frac{1}{a}}_\infty^2\bigl(\abs{z}+\norm{b}_\infty + 1\bigr) + 1 \quad(k\in\Z).\]
  Thus, we have
  \begin{align*}
  & \norm{\begin{pmatrix}
      u(n+1) - u_m(n+1)\\a(n+1)u(n) - a_m(n+1)u_m(n)
    \end{pmatrix}}\\
    & \leq \bigg(\norm{\frac{1}{a}}_\infty\norm{z-b}_\infty + \norm{\frac{1}{a}}_\infty + \norm{a}_\infty\bigg)^{2\abs{n}}\bigg(\norm{\frac{1}{a}}_\infty^2\bigl(\abs{z}+\norm{b}_\infty + 1\bigr) + 1\bigg)\\
    & \qquad \cdot \sum_{k=\min\{n+1,1\}}^{\max\{0,n\}}\bigl(\abs{a(k+1)-a_m(k+1)} + \abs{b(k)-b_m(k)}\bigr) \\
    & \leq \bigg(\norm{\frac{1}{a}}_\infty\norm{z-b}_\infty + \norm{\frac{1}{a}}_\infty + \norm{a}_\infty\bigg)^{2\abs{n}}\bigg(\norm{\frac{1}{a}}_\infty^2\bigl(\abs{z}+\norm{b}_\infty + 1\bigr) + 1\bigg) \abs{n}\\
    & \qquad \cdot \max_{k=\min\{n+1,1\},\ldots,\max\{0,n\}}\bigl(\abs{a(k+1)-a_m(k+1)} + \abs{b(k)-b_m(k)}\bigr).
   \end{align*}
   Thus, for $n\in \{-p_m,\ldots,2p_m\}$ we obtain
  \begin{align}
  \nonumber
      & \norm{\begin{pmatrix}
      u(n+1) - u_m(n+1)\\a(n+1)u(n) - a_m(n+1)u_m(n)
    \end{pmatrix}}\\\nonumber 
    & \leq \bigg(\norm{\frac{1}{a}}_\infty\norm{z-b}_\infty + \norm{\frac{1}{a}}_\infty + \norm{a}_\infty\bigg)^{2p_m}\bigg(\norm{\frac{1}{a}}_\infty^2\bigl(\abs{z}+\norm{b}_\infty + 1\bigr) + 1\bigg)2p_m \\
    \label{eq:Jacobi_est}
    & \qquad \cdot \max_{k=-p_m+1,\ldots,2p_m}\bigl(\abs{a(k+1)-a_m(k+1)} + \abs{b(k)-b_m(k)}\bigr).
  \end{align}
  Since $a_m(k+1) = a(k+1)$ and $b_m(k) = b(k)$ for $k\in\set{1,\ldots,p_m}$, we infer
  \begin{align*}
    & \max_{k=-p_m+1,\ldots,2p_m}\bigl(\abs{a(k+1)-a_m(k+1)} + \abs{b(k)-b_m(k)}\bigr) \\
    & = \max_{k=-p_m+1,\ldots,p_m}\bigl(\abs{a(k+1)-a(k+1+p_m)} + \abs{b(k)-b(k+p_m)}\bigr).
  \end{align*}
  By assumption on $\abs{z}$ we obtain
  \[\bigg(\norm{\frac{1}{a}}_\infty\norm{z-b}_\infty + \norm{\frac{1}{a}}_\infty + \norm{a}_\infty\bigg)^{2p_m} < e^{Cp_m}\]
  for large $m$, so
  the right-hand side in \eqref{eq:Jacobi_est} tends to zero as $m\to \infty$. Hence, there exists $m_0\in\N$, such that for all $m\geq m_0$ we have
  \[\norm{\begin{pmatrix}
      u(n+1) - u_m(n+1)\\a(n+1)u(n) - a_m(n+1)u_m(n)
    \end{pmatrix}} \leq \frac{1}{4} \quad(n\in\{-p_m,\ldots,2p_m\}).\]
  Since
  \[\max\set{\norm{\begin{pmatrix}
		    u_m(n+1) \\ a_m(n+1) u_m(n)
                   \end{pmatrix}};\; n\in\set{-p_m,p_m,2p_m}} \geq \frac{1}{2}\]
  for all $m\in\N$ by Lemma \ref{lem:Jacobi_per}, we conclude that
  \[\limsup_{\abs{n}\to\infty} \norm{\begin{pmatrix}
      u(n+1)\\a(n+1)u(n)
    \end{pmatrix}} \geq \frac{1}{4}.\]
  Since $\inf_{n\in\Z}\abs{a(n)}>0$ this implies $\limsup_{\abs{n}\to\infty} 
\abs{u(n)} > 0$, and therefore $u\notin c_0(\Z)$.
\end{proof}

\begin{example}
  Let $\tilde{a},\tilde{b}\from\T\to\C$ be $\beta$-H\"older continuous and $\tilde{a}(x)\neq 0$ for all $x\in\T$. 
  Let $\alpha>0$ satisfy
  \[\abs{\alpha-\frac{p_m}{q_m}} \leq B m^{-q_m} \quad(m\in\N)\]
  for suitable $B>0$ and a suitable sequence $(\frac{p_m}{q_m})$ in $\Q$. Note that the set of all such numbers $\alpha$ is a dense $G_\delta$ set.
  Set $a(n):= \tilde{a}(\alpha\cdot n)$ and $b(n):= \tilde{b}(\alpha\cdot n)$ for all $n\in\N$.
  Then by the H\"older continuity and the assumption on $\alpha$ we observe
  \begin{align*}
    \abs{a(k+1) - a(k+1+q_m)} & \leq c\cdot \dist(\alpha q_m,\Z)^\beta \leq 
c\bigl(Bq_m m^{-q_m})^\beta \\
& = cB^\beta q_m^\beta e^{-\beta q_m \log m}
  \end{align*}
  for all $k\in\Z$ and $m\in\N$, and similarly for $b$.
  Thus, \eqref{eq:Jacobi_exp} is satisfied for all $C>0$, and therefore $H_{a,b}$ does not have any eigenvalues.
\end{example}

\section{The Sturm-Liouville case}
\label{sec:SL}

We say that
\[
  \mu \from \set{B\subseteq\R;\;B \text{ is a bounded Borel set}} \to \C
\]
is a \emph{local measure} if $\1_K\mu := \mu(\cdot\cap K)$ is a
complex Radon measure for any compact set $K\subseteq \R$.
Then there exist a (unique) nonnegative Radon measure $\nu$ on $\R$ and
a measurable function $\sigma\from\R\to \C$ such that $\abs{\sigma} = 1$ $\nu$-a.e.\ and
$\1_K\mu = \1_K\sigma\nu$ for all compact sets $K\subseteq \R$. The \emph{total variation} of $\mu$ is defined by $\abs{\mu}:=\nu$.
Let $\Mloc$ be the space of all local measures on $\R$.

A local measure $\mu\in \Mloc$ is called \emph{uniformly locally bounded} if
\[
  \norm{\mu}_\lu := \sup_{x\in\R} \abs{\mu}((x,x+1]) < \infty.
\]
Let $\M$ denote the space of all uniformly locally bounded local measures.
The space $\M$ naturally extends $L_{1,\loc,\unif}(\R)$ to measures.


Given 
$a\in L_\infty(\R)$ such that $\frac{1}{a}\in L_\infty(\R)$ and $\mu\in\M$ we 
consider the 
differential operator
\begin{align*}
	D(H_{a,\mu}) & := \set{u\in C_0(\R)\cap W_{1,\loc}^1(\R);\; -\partial 
a\partial u + u\mu \in C_0(\R)},\\
	H_{a,\mu} & := -\partial a\partial u + u\mu
\end{align*}
in $C_0(\R)$ (the space of continuous functions on $\R$ converging to $0$ at $\pm\infty$), where the terms are interpreted in the sense of distributions.

\begin{remark}
	We will also consider this operator in $L_p(\R)$, where $1\leq p<\infty$. 
Then $C_0(\R)$ has to be replaced by $L_p(\R)$.
\end{remark}

\begin{remark}
	For $\mu\in\M$ and $s,t\in\R$ we write
	\[\int_s^t\ldots\,d\mu := \begin{cases} 
	\int_{(s,t]} \ldots\,d\mu & s<t,\\
	0 & s=t,\\
	-\int_{(t,s]} \ldots\,d\mu & s>t.
	\end{cases}
	\]
\end{remark}

\begin{definition}
	Let $a\in L_\infty(\R)$ such that $\frac{1}{a}\in L_\infty(\R)$, $\mu \in 
\M$, $z\in\C$. We say that $u\in L_{1,\loc}(\R)$ is a solution of 
	\[H_{a,\mu}u = zu,\]
	if $u\in W_{1,\loc}^1(\R)$ and $-\partial a \partial u + u\mu = zu$ in the 
sense of distributions.
\end{definition}

\begin{lemma}
\label{lem:est_solutions}
  Let $a\in L_\infty(\R)$ such that $\frac{1}{a}\in L_\infty(\R)$, $\mu \in 
\M$, $z\in\C$, $u$ a solution of $H_{a,\mu} u = zu$, $I\subseteq\R$ an 
interval of length $1$, $1\leq p<\infty$. Then
  \[\norm{(au')|_I}_p\leq \norm{(au')|_I}_\infty\leq M\norm{u|_I}_\infty \leq 
(p+1)^{1/p}M^{(p+1)/p}\norm{u|_I}_p,\]
  where $M = (2\norm{a}_\infty+\norm{\mu-z\lambda}_{\unif})$. In particular, 
$u\in L_p(\R)$ implies $au'\in L_p(\R)$ and $u(x)\to 0$ as $\abs{x}\to\infty$
 implies $(au')(x\rlim)\to 0$ as $\abs{x}\to\infty$.
\end{lemma}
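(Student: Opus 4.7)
The plan is to prove the three inequalities in the chain separately. The first, $\norm{(au')|_I}_p \leq \norm{(au')|_I}_\infty$, is immediate from H\"older since $\abs{I} = 1$. For the deeper estimates I would begin from the observation that the defining equation $-\partial(au') + u\mu = zu\lambda$ identifies $au'$ with (a one-sided representative of) a locally BV function whose distributional derivative equals the locally finite measure $u(\mu-z\lambda)$. Consequently, for any $x_0, y \in I$,
\[
(au')(y) - (au')(x_0) = \int_{x_0}^y u\,d(\mu-z\lambda),
\]
which immediately yields the pointwise oscillation bound $\abs{(au')(y) - (au')(x_0)} \leq \norm{u|_I}_\infty \norm{\mu-z\lambda}_\unif$.

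To upgrade this oscillation control to the absolute bound $\norm{(au')|_I}_\infty \leq M\norm{u|_I}_\infty$, I would fix $x_0 \in I$ and expand $u' = (au')/a$; integrating over $I = (s,t]$ gives
\[
u(t) - u(s) = (au')(x_0)\int_s^t \frac{dy}{a(y)} + \int_s^t \frac{(au')(y) - (au')(x_0)}{a(y)}\,dy.
\]
The last integral is controlled by the oscillation bound together with $\norm{1/a}_\infty$, the left-hand side has modulus at most $2\norm{u|_I}_\infty$, and a lower bound of the form $\int_I 1/a \geq 1/\norm{a}_\infty$ (which in the Sturm-Liouville regime with $a$ real positive follows from Jensen's inequality) allows one to solve for $\abs{(au')(x_0)}$ and conclude. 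The main obstacle of the whole lemma sits exactly here: converting a BV oscillation bound into an absolute sup bound requires nontrivial lower control on an average of $1/a$, which is clean under positivity assumptions on $a$.

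The last inequality then follows by a Morrey-type argument. The middle bound combined with $\abs{u'} \leq \abs{au'}/\abs{a}$ makes $u$ Lipschitz on $I$ with constant $L \leq M\norm{u|_I}_\infty$. Picking $y_0 \in I$ realizing (or approaching) $N := \norm{u|_I}_\infty$, the Lipschitz lower envelope $\abs{u(y)} \geq N - L\abs{y - y_0}$ integrates to
\[
\norm{u|_I}_p^p \geq \frac{N^{p+1}}{(p+1)L}
\]
(with an extra factor $\tfrac{1}{2}$ when $y_0$ is near $\partial I$), and inserting $L \leq MN$ gives $N \leq ((p+1)M)^{1/p} \norm{u|_I}_p$, i.e., the claim after multiplying by $M$. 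The ``in particular'' statements then follow: summing the $p$-th power of the full chain over a partition of $\R$ into unit intervals yields $au' \in L_p(\R)$ from $u \in L_p(\R)$, while applying the middle inequality on $I = (x,x+1]$ with $\abs{x}\to\infty$ transfers decay of $u$ to decay of $(au')(x\rlim)$.
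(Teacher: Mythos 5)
Your first and third inequalities follow essentially the paper's lines, but your route to the middle bound $\norm{(au')|_I}_\infty\leq M\norm{u|_I}_\infty$ has a genuine gap, and you have in fact located it yourself: to solve for $(au')(x_0)$ you need a lower bound on $\bigl|\int_I \frac{1}{a}\bigr|$, and you only justify such a bound ``in the Sturm--Liouville regime with $a$ real positive.'' The lemma, however, is stated for complex $a$ with $a,\frac1a\in L_\infty(\R)$ --- this is the point of the paper, which treats non-self-adjoint operators with complex coefficients --- and for complex $a$ the integral $\int_I\frac{1}{a}$ can be arbitrarily small or zero by cancellation, so no anchor value for $au'$ can be extracted this way. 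Moreover, even under a positivity assumption your constant would come out as $\norm{a}_\infty\bigl(2+\norm{\tfrac1a}_\infty\norm{\mu-z\lambda}_\unif\bigr)$ rather than $M=2\norm{a}_\infty+\norm{\mu-z\lambda}_\unif$, since $\norm{a}_\infty\norm{\tfrac1a}_\infty\geq 1$ in general.

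The fix --- and the paper's actual argument --- is to apply the mean-value idea to $u'$ itself rather than to $au'$: since $\abs{I}=1$ and $\int_I u'(t\rlim)\,dt$ equals a difference of two values of $u$ on $\overline{I}$, it has modulus at most $2\norm{u|_I}_\infty$, so there is $t_0\in I$ with $\abs{u'(t_0\rlim)}\leq 2\norm{u|_I}_\infty$ and hence $\abs{(au')(t_0\rlim)}\leq 2\norm{a}_\infty\norm{u|_I}_\infty$. Combining this with your (correct) oscillation bound $\abs{(au')(t\rlim)-(au')(t_0\rlim)}\leq\norm{\mu-z\lambda}_\unif\norm{u|_I}_\infty$ gives exactly $M\norm{u|_I}_\infty$, with no positivity and no lower bound on averages of $\frac1a$ required. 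The remainder of your proposal (H\"older for the first inequality, the Lipschitz lower-envelope argument for the third, and the two ``in particular'' deductions by summing over unit intervals and by letting $\abs{x}\to\infty$) matches the paper's proof.
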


\begin{proof}
  The first estimate is trivial. For the second estimate note that
  \[\abs{\int_{I} u'(t\rlim)\, dt} \leq 2\norm{u|_I}_\infty,\]
  so there exists $t_0\in I$ such that $\abs{u'(t_0\rlim)}\leq 2\norm{u|_I}_\infty$.
  For $t\in I$ we compute
  \[\abs{(au')(t\rlim)}\leq \abs{(au')(t_0\rlim)} + \abs{\int_{t_0}^t u(s)\, d(\mu-z\lambda)(s)} \leq (2\norm{a}_{\infty}+\norm{\mu-z\lambda}_\unif)\norm{u|_I}_\infty\]
  which proves the second inequality.
  Now, choose $s_0\in I$ such that $\abs{u(s_0)} = \norm{u|_I}_\infty$. Then
  \[\abs{u(s_0+t)} = \abs{u(s_0) + \int_{s_0}^{s_0+t} u'(s)\,ds} \geq (1 - \abs{t} (2\norm{a}_\infty+\norm{\mu-z\lambda}_\unif))\norm{u|_I}_\infty\]
  for all $t\in\R$ with $s_0+t\in I$. Hence, we conclude
  \[\norm{u|_I}_p^p \geq \int_0^{1/M} (Mt)^p\norm{u|_I}_\infty^p\, dt = 
\frac{1}{(p+1)M}\norm{u|_I}_\infty^p. \qedhere\]
\end{proof}

\begin{remark}
  Let $a\in L_\infty(\R)$ such that $\frac{1}{a}\in L_\infty(\R)$, $\mu \in 
\M$. 
  There is also a unique realization of $-\partial a\partial + \mu$ via 
Sturm-Liouville theory, cf.\ \cite{EckhardtTeschl2011}.
  To this end, for $u\in W_{1,\loc}^1(\R)$ we define $A_{a,\mu} u\in 
L_{1,\loc}(\R)$ by
  \[(A_{a,\mu}u)(t) := (au')(t\rlim) - \int_0^t u(s)\, d\mu(s)\]
  for a.a.\ $t\in\R$. 
  Define
  \begin{align*}
    D(H_{a,\mu}^{\SL}) & := \set{u\in C_0(\R)\cap 
W_{1,\loc}^1(\R);\;A_{a,\mu}u\in W_{1,\loc}^1(\R),\, (A_{a,\mu}u)'\in 
C_0(\R)},\\
    H_{a,\mu}^{\SL}u & := -(A_{a,\mu}u)'.
  \end{align*}
\end{remark}

\begin{proposition}
\label{prop:Def_equal}
  Let $a\in L_\infty(\R)$ such that $\frac{1}{a}\in L_\infty(\R)$, $\mu \in 
\M$.  Then
  $H_{a,\mu} = H_{a,\mu}^{\SL}$.
\end{proposition}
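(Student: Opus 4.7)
My plan is to prove the equality via the distributional identity
\[
\partial(au') \;=\; u\mu \;+\; \partial(A_{a,\mu}u),
\]
which I will derive directly from the defining formula of $A_{a,\mu}u$ using that the function $t\mapsto\int_0^t u\,d\mu$ has distributional derivative equal to the distribution $u\mu$. Rewriting gives $-\partial a\partial u + u\mu = -\partial(A_{a,\mu}u)$ whenever both sides make sense as distributions, and this single identity will be enough to translate one domain condition into the other.

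For the inclusion $H_{a,\mu}^{\SL}\subseteq H_{a,\mu}$, I would take $u\in D(H_{a,\mu}^{\SL})$. The hypothesis $A_{a,\mu}u\in W_{1,\loc}^1(\R)$ identifies the distributional derivative of $A_{a,\mu}u$ with the $L_{1,\loc}$-function $(A_{a,\mu}u)'$. Plugging this into the identity above yields $-\partial a\partial u + u\mu = -(A_{a,\mu}u)'\in C_0(\R)$, so $u\in D(H_{a,\mu})$ and $H_{a,\mu}u = H_{a,\mu}^{\SL}u$.

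For the reverse inclusion, I start from $u\in D(H_{a,\mu})$ and set $f:=-\partial a\partial u + u\mu\in C_0(\R)$; the identity now reads $\partial(au') = u\mu - f\lambda$ as distributions, where $\lambda$ is Lebesgue measure. Since the right-hand side is a locally finite complex Radon measure, the standard one-dimensional fact that a distribution whose derivative is a Radon measure admits a locally BV representative shows that $au'$ has a right-continuous representative satisfying
\[
(au')(t\rlim) \;=\; (au')(0\rlim) + \int_0^t u\,d\mu - \int_0^t f(s)\,ds, \quad t\in\R.
\]
Subtracting $\int_0^t u\,d\mu$ gives $A_{a,\mu}u(t) = (au')(0\rlim) - \int_0^t f(s)\,ds$, which is absolutely continuous with derivative $-f\in C_0(\R)$; hence $u\in D(H_{a,\mu}^{\SL})$ and $H_{a,\mu}^{\SL}u = f = H_{a,\mu}u$.

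The step I expect to require the most care is the passage from the distributional identity for $\partial(au')$ to the pointwise formula involving $(au')(\cdot\rlim)$: one must verify that the right-limit appearing in the definition of $A_{a,\mu}$ is exactly the canonical right-continuous BV representative produced by the distributional argument, so that the two a.e.~expressions for $A_{a,\mu}u$ actually coincide as the relevant continuous function. Modulo that (standard but slightly technical) reconciliation, everything reduces to a direct manipulation of distributional derivatives.
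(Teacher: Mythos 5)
Your proof is correct and rests on exactly the same key step as the paper's: the identity $\partial\bigl(\int_0^{\cdot}u\,d\mu\bigr)=u\mu$ in the sense of distributions, which the paper establishes by the Fubini computation against test functions and which you package once as $\partial(au')=u\mu+\partial(A_{a,\mu}u)$ before reading off both inclusions. The representative-reconciliation issue you flag is in fact moot, since $A_{a,\mu}u$ is only defined as an element of $L_{1,\loc}(\R)$ (``for a.a.\ $t$''), so the condition $A_{a,\mu}u\in W_{1,\loc}^1(\R)$ concerns the equivalence class and any choice of representative of $au'$ gives the same class.
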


\begin{proof}
  Let $u\in D(H_{a,\mu})$. Then $u\in W_{2}^1(\R)\subseteq W_{1,\loc}^1(\R)$ and $A_{a,\mu}u \in L_{1,\loc}(\R)$. Let $\varphi\in C_c^\infty(\R)$.
  By Fubini's Theorem we observe
  \begin{align*}
    \int (A_{a,\mu}u)\varphi' & = \int \bigg(au' - \int_0^xu(t)\,d\mu(t)\bigg)\varphi'(x)\, dx\\
    & = \int au'\varphi' - \int_\R \int_0^xu(t)\,d\mu(t)\varphi'(x)\, dx\\
    & = \int au'\varphi' + \int_{-\infty}^0  \!\int_{(-\infty,t)} \!\!\varphi'(x)\, dx u(t)\,d\mu(t) - \int_0^\infty \!\int_{[t,\infty)}\!\!\varphi'(x)\,dx u(t)\, d\mu(t)\\
    & = \int au'\varphi' + \int u\varphi\,d\mu
    = \int H_{a,\mu}u\varphi.
  \end{align*}
  Hence, $(A_{a,\mu}u)' = -H_{a,\mu}u \in C_0(\R)$. Therefore, also 
$A_{a,\mu}u\in W_{1,\loc}^1(\R)$ which implies $u\in D(H_{a,\mu}^{\SL})$, 
$H_{a,\mu} u = H_{a,\mu}^{\SL}u$.
  
  Conversely, let $u\in D(H_{a,\mu}^\SL)$.
  For $\varphi\in C_c^\infty(\R)$ 
  we compute by the help of Fubini's Theorem
  \begin{align*}
    \int -(A_{a,\mu}u)' \varphi & = \int (A_{a,\mu}u) \varphi'
    = \int au' \varphi' - \int \int_0^t u(s)\,d\mu(s) \varphi'(t)\,dt \\
    & = \int au' \varphi' + \int u\varphi\, d\mu. 
  \end{align*}
  Thus, $u\in D(H_{a,\mu})$, $H_{a,\mu}u = -(A_{a,\mu}u)'$.
\end{proof}

\begin{remark}
	Proposition \ref{prop:Def_equal} remains true if we consider the operators 
in $L_p(\R)$ with $1\leq p<\infty$. Furthermore, considering the operators 
in $L_2(\R)$, if additionally $a$ takes values only in a sector around the 
positive real axis, we obtain an equivalent characterization of 
$H_{a,\mu}$ via sectorial forms; cf.\ \cite[Remark 3.5 and Theorem 
3.6]{SeifertVogt2014} in case of Schr\"odinger operators.
\end{remark}

Since the operator is now defined, we will next focus on measuring distances of
 elements in $\M$.

\begin{definition}
For $\mu \in \M$ and a set $I \subseteq \R$ (which will usually be an interval) we define
\[
  \norm{\mu}_I := \sup\set{\Bigl|\int u\, d\mu\Bigr|;\;
  u\in W_{\!\infty}^1(\R),\: \spt u \subseteq I,\: \diam\spt u \leq 2,\: \norm{u'}_\infty \le 1 }.
\]
\end{definition}

For $\mu\in\M$ we define $\phi_\mu\from\R\to\C$ by
\[
  \phi_\mu(t) := \int_0^t d\mu
  = \begin{cases}
     \mu\bigl((0,t]\bigr) & \text{ if } t\ge0, \\
    -\mu\bigl((t,0]\bigr) & \text{ if } t<0.
    \end{cases}
\]

\begin{proposition}[see {\cite[Proposition 2.7, Remark 2.8 and Lemma 2.9]{SeifertVogt2014}}]
  Let $\mu\in\M$ and $x\in\R$. Then
  \[\norm{\mu}_{[x-1,x+1]} \leq \min_{c\in\C} \int_{x-1}^{x+1} \abs{\phi_\mu(t)-c}\,dt \leq 2\norm{\mu}_{[x-1,x+1]}.\]
  Hence, there exists $c_x=c_{\mu,x}\in\C$, such that 
  \[\int_{x-1}^{x+1} \abs{\phi_\mu(t)-c_x}\,dt \leq 2\norm{\mu}_{[x-1,x+1]}.\]
  Moreover, $c_{\mu,0}$ can be chosen such that $\abs{c_{\mu,0}}\leq \norm{\mu}_\unif$.
  Furthermore, for $\alpha,\beta\in\Z$, $\alpha\leq -1$, $\beta\geq1$ and $k\in\Z\cap[\alpha,\beta-1]$ we have
  \[\int_k^{k+1} \abs{\varphi_\mu(t)-c_0}\,dt\leq 2\max\{k+1,-k\}\norm{\mu}_{[\alpha,\beta]}.\]
\end{proposition}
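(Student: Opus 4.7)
The proof hinges on the integration-by-parts identity connecting $\mu$ and $\phi_\mu$: for any $u\in W_\infty^1(\R)$ with compact support, Fubini's theorem yields $\int u\, d\mu = -\int u'(t)\phi_\mu(t)\, dt$, and because $\int u'\, dt = 0$ one may equivalently write $\int u\, d\mu = -\int u'(t)(\phi_\mu(t)-c)\, dt$ for any $c\in\C$. The first inequality of the proposition follows immediately: for $u$ admissible in the definition of $\norm{\mu}_{[x-1,x+1]}$, the identity gives $\abs{\int u\, d\mu}\leq \int_{x-1}^{x+1}\abs{\phi_\mu-c}\, dt$, and taking the supremum over $u$ followed by the infimum over $c$ produces the stated bound.

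For the reverse inequality, existence of a minimizer $c_x$ is clear since $c\mapsto \int_{x-1}^{x+1}\abs{\phi_\mu-c}\, dt$ is continuous and coercive on $\C$. At a minimizer, subgradient optimality (treating $\C\cong\R^2$) supplies a measurable $g\from[x-1,x+1]\to\C$ with $\abs{g}\leq 1$, $g=(\phi_\mu-c_x)/\abs{\phi_\mu-c_x}$ wherever $\phi_\mu\neq c_x$, and $\int g\, dt=0$. The function $u(t) := \int_{x-1}^t \overline{g(s)}\, ds$ is then admissible (it vanishes at both endpoints by $\int g = 0$, has $\norm{u'}_\infty\leq 1$, and support in $[x-1,x+1]$), and substitution into the identity yields $\int_{x-1}^{x+1}\abs{\phi_\mu-c_x}\, dt = \abs{\int u\, d\mu} \leq \norm{\mu}_{[x-1,x+1]}$, which is in fact sharper than the stated factor $2$. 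To arrange $\abs{c_{\mu,0}}\leq \norm{\mu}_\unif$, note that $\phi_\mu(0)=0$ and $\abs{\phi_\mu(t)}\leq\norm{\mu}_\unif$ for $t\in[-1,1]$, so $\phi_\mu([-1,1])$ lies in the closed disk of radius $\norm{\mu}_\unif$; projection of any minimizer onto this convex disk cannot increase $\abs{\phi_\mu(t)-c}$ pointwise and therefore places $c_{\mu,0}$ inside the disk without enlarging the integral.

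For the last inequality, set $c_j := c_{\mu,j}$ and telescope: for $k\geq 0$ with $[k-1,k+1]\subseteq[\alpha,\beta]$,
\[\int_k^{k+1}\abs{\phi_\mu-c_0}\, dt \leq \int_k^{k+1}\abs{\phi_\mu-c_k}\, dt + \sum_{i=0}^{k-1}\abs{c_{i+1}-c_i},\]
with the first term $\leq 2\norm{\mu}_{[\alpha,\beta]}$ from the second inequality, and each step $\abs{c_{i+1}-c_i}$ controlled by integrating over the overlap $[i,i+1]$ and applying the triangle inequality to both $\phi_\mu-c_i$ and $\phi_\mu-c_{i+1}$; the case $k<0$ is symmetric. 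The main obstacle is matching the sharp constant $2(k+1)$: the naive step bound $\abs{c_{i+1}-c_i}\leq 4\norm{\mu}_{[\alpha,\beta]}$ produces growth $(4k+2)\norm{\mu}_{[\alpha,\beta]}$, a factor of roughly $2$ worse. Achieving the stated bound likely requires either a more judicious choice of the $c_j$ (not generic minimizers but specific values such as midpoint values or averages of $\phi_\mu$), or a test function of diameter $\leq 2$ that directly probes $c_{i+1}-c_i$ rather than bounding it via two successive triangle inequalities.
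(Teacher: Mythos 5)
Your treatment of the first three assertions is correct, and in fact sharper than needed: the integration-by-parts identity $\int u\,d\mu=-\int u'(\phi_\mu-c)$ gives the lower bound at once, and your duality/subgradient construction of the test function $u(t)=\int_{x-1}^t\overline{g}$ yields $\min_c\int_{x-1}^{x+1}\abs{\phi_\mu-c}\le\norm{\mu}_{[x-1,x+1]}$, i.e.\ constant $1$ rather than $2$ (the admissibility of $u$ is fine since $\int g=0$ and the interval has length $2$). The projection onto the disk of radius $\norm{\mu}_\unif$ to arrange $\abs{c_{\mu,0}}\le\norm{\mu}_\unif$ is also valid. (The paper itself only cites \cite{SeifertVogt2014} for this proposition, so there is no in-paper proof to compare against.)

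The last inequality, however, is not proved. As you yourself observe, telescoping the minimizers $c_j$ gives $\abs{c_{i+1}-c_i}\le 4\norm{\mu}_{[\alpha,\beta]}$ per step and hence a bound of order $(4k+2)\norm{\mu}_{[\alpha,\beta]}$, which exceeds the claimed $2(k+1)\norm{\mu}_{[\alpha,\beta]}$ for every $k\ge1$; saying the correct constant ``likely requires'' a better choice is a statement of the gap, not a proof. The missing idea is a one-step recursion that avoids the constants $c_j$ for $j\ne0$ altogether: for $k\ge1$ take $w$ supported on $[k-1,k+1]\subseteq[\alpha,\beta]$ with $w'=\overline{\sgn(\phi_\mu-c_0)}$ on $[k,k+1]$ and $w'\equiv h$ on $[k-1,k]$, where the constant $h$ (with $\abs{h}\le1$) is chosen so that $w(k+1)=0$. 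Then $w$ is admissible for $\norm{\mu}_{[\alpha,\beta]}$, and the identity $\int w\,d\mu=-\int w'(\phi_\mu-c_0)$ gives
\[
\int_k^{k+1}\abs{\phi_\mu-c_0}\,dt\le\norm{\mu}_{[\alpha,\beta]}+\int_{k-1}^{k}\abs{\phi_\mu-c_0}\,dt.
\]
Iterating down to $\int_0^1\abs{\phi_\mu-c_0}\le\int_{-1}^1\abs{\phi_\mu-c_0}\le2\norm{\mu}_{[\alpha,\beta]}$ yields $\int_k^{k+1}\abs{\phi_\mu-c_0}\le(k+2)\norm{\mu}_{[\alpha,\beta]}\le2(k+1)\norm{\mu}_{[\alpha,\beta]}$ for $k\ge0$, and the symmetric recursion handles $k\le-1$. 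You should replace the telescoping paragraph with an argument of this kind.
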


We can now state Gordon's Theorem for Sturm-Liouville operators.

\begin{theorem}
\label{thm:SL}
  Let $a\in L_\infty(\R)$ such that $\frac{1}{a}\in L_\infty(\R)$ and $\mu \in 
\M$.
  Assume there exists $(p_m)$ in $(0,\infty)$ such that $p_m\to \infty$ and $C>0$ such that
  \begin{equation}
    \label{eq:SL_exp}
    e^{Cp_m} \left(\norm{a - a(\cdot-p_m)}_{L_1(-p_m,p_m)} + 
\norm{\mu-\mu(\cdot+p_m)}_{[-p_m,p_m]}\right)\to 0.
  \end{equation}
  Then $H_{(a,\mu)}$ does not have any eigenvalues with modulus less than $C^2\norm{\frac{1}{a}}_\infty - \norm{\mu}_\unif$.
\end{theorem}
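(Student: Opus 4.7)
The plan is to transfer the strategy from the proof of Theorem \ref{thm:Jacobi} to the continuum, using the continuum Gronwall lemma from the appendix together with the weak norm $\norm{\cdot}_{[\alpha,\beta]}$ and its link to $\phi_\mu$ from the preceding proposition. Suppose for contradiction that some $z\in\C$ with $\abs{z} < C^2\norm{\frac{1}{a}}_\infty - \norm{\mu}_\unif$ is an eigenvalue of $H_{a,\mu}$, and let $u$ be an eigenfunction. By Lemma \ref{lem:est_solutions}, both $u$ and $au'$ tend to $0$ at infinity, so after normalization I may assume that $U(0) := (u(0),(au')(0\rlim))^\top$ has norm $1$. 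For each $m$ I introduce the $p_m$-periodic approximations $a_m$ and $\mu_m$ obtained by periodically extending the restrictions of $a$ and $\mu$ to $(0,p_m]$, and let $u_m$ be the solution of $H_{a_m,\mu_m} u_m = z u_m$ with matching initial data $U_m(0) = U(0)$.

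First, I would establish the continuum analog of Lemma \ref{lem:Jacobi_per}: Wronskian conservation gives $\det T = 1$ for the period-$p_m$ transfer matrix $T$ of the periodic problem, so Cayley-Hamilton yields $T^2 - (\tr T)T + \Id = 0$. Applying this to $U_m(0)$ when $\abs{\tr T}\leq 1$ and to $U_m(-p_m)$ when $\abs{\tr T}>1$, exactly as in the proof of Lemma \ref{lem:Jacobi_per}, one obtains
\[
  \max\set{\norm{U_m(t)};\; t\in\set{-p_m,p_m,2p_m}} \geq \tfrac{1}{2}.
\]

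Second, and this is the technical heart of the argument, I would prove a continuum analog of Lemma \ref{lem:Jacobi_diff} controlling $V_m(t) := U(t) - U_m(t)$ on $[-p_m,2p_m]$. Writing $U$ and $U_m$ as solutions of first-order Sturm-Liouville systems driven by $(a,\mu)$ and $(a_m,\mu_m)$ respectively, $V_m$ satisfies an integral equation whose inhomogeneity is driven by $a-a_m$ and the measure $\mu-\mu_m$. The term involving $d(\mu-\mu_m)$ must be rewritten by integration by parts against the antiderivative $\phi_{\mu-\mu_m}$, so that the preceding proposition converts it into the weak norm $\norm{\mu-\mu_m}_{[-p_m,p_m]}$. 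Combined with the transfer-matrix growth estimate and the continuum Gronwall lemma, this leads to a bound
\[
  \sup_{t\in[-p_m,2p_m]}\norm{V_m(t)} \leq K_m\bigl(\norm{a-a_m}_{L_1(-p_m,p_m)} + \norm{\mu-\mu_m}_{[-p_m,p_m]}\bigr),
\]
where $K_m$ is polynomial in $p_m$ times the exponential of the transfer-matrix growth rate over a length-$2p_m$ interval. The hypothesis on $\abs{z}$ is calibrated precisely so that $K_m \ll e^{Cp_m}$, and condition \eqref{eq:SL_exp} then forces $\sup_{[-p_m,2p_m]}\norm{V_m}\to 0$.

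Combining the two preceding steps, for all sufficiently large $m$ one has $\norm{U(t)}\geq 1/4$ at some $t\in\set{-p_m,p_m,2p_m}$, and hence $\limsup_{\abs{t}\to\infty}\norm{U(t)}>0$; by Lemma \ref{lem:est_solutions} this contradicts $u\in C_0(\R)$. The main obstacle will be the measure-theoretic bookkeeping in the continuum Gronwall step: because $\mu-\mu_m$ is a genuine measure and only its weak norm is controlled by \eqref{eq:SL_exp}, the integral equation for $V_m$ must be recast so that $\mu-\mu_m$ enters only through integrals of the form $\int\psi\,d(\mu-\mu_m)$ against suitably Lipschitz test functions $\psi$, enabling the preceding proposition to convert them to the weak norm uniformly in $t\in[-p_m,2p_m]$.
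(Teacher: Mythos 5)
Your overall architecture (periodic approximants, a three-block Cayley--Hamilton lemma, a perturbation estimate in the weak norm, contradiction with $u\in C_0(\R)$) does match the paper's. But there is a genuine gap at exactly the point you describe as "calibrated precisely": the claimed eigenvalue bound $C^2\norm{\frac{1}{a}}_\infty-\norm{\mu}_\unif$ is quadratic in $C$, and this can only come from a transfer-matrix growth rate of the form $e^{\omega\abs{t-s}}$ with $\omega^2=\norm{\tfrac1a}_\infty^{-1}\bigl(\norm{\mu}_\unif+\abs{z}\bigr)$, i.e.\ a \emph{square-root} dependence on the coefficients. The continuum Gronwall lemma applied to $\abs{u}+\abs{au'}$ (as in Lemma \ref{lem:SL_est1}) only yields the rate $\norm{\tfrac1a}_\infty+\norm{\mu-z\lambda}_\unif$, which is linear in $\norm{\mu}_\unif$ and $\abs{z}$ and would produce an eigenvalue bound of the shape $\tfrac{C}{2}-\norm{\tfrac1a}_\infty-\norm{\mu}_\unif$, not the stated one. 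The paper obtains the sharp rate in Lemma \ref{lemm:SL_est2} by a separate energy argument: one differentiates $\omega^2\abs{u}^2+\abs{au'}^2$ for smooth densities and then passes to general $\mu\in\M$ by an approximation scheme ($\mu_n\to\mu$ weakly with $\norm{\mu_n}_\unif\leq\norm{\mu}_\unif$), which itself uses the comparison Lemma \ref{lem:SL_diff2}. Your proposal never produces this estimate, so the quantitative heart of the theorem is missing.

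Two further points would derail the constants even granting the sharp rate. First, periodizing the restriction of $\mu$ to $(0,p_m]$ can nearly double $\norm{\mu_m}_\unif$ (mass from both ends of a period can accumulate in a single unit interval straddling a gluing point), which spoils the limit $\omega_m^2\to\norm{\tfrac1a}_\infty^{-1}(\norm{\mu}_\unif+\abs{z})<C^2$; the paper instead invokes Lemma \ref{lem:equiv_Gordon} to build $\mu_m$ with $\norm{\mu_m}_\unif\leq(1+\tfrac{1}{2\alpha_m})\norm{\mu}_\unif$, agreeing with $\mu$ only on $[\alpha_m,p_m-\alpha_m]$, and matches the initial data of $u_m$ at $\alpha_m$ (also ensuring the offset constant $c_{\mu-\mu_m,\cdot}$ in Lemma \ref{lem:SL_diff2} vanishes there, whereas at $0$, a gluing point of your periodization, it need not). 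Second, if you bound $\sup_{[-p_m,2p_m]}\norm{V_m}$ by the growth over a length-$2p_m$ interval from the matching point, you only get $\omega<C/2$, i.e.\ the weaker bound $\tfrac{C^2}{4}\norm{\tfrac1a}_\infty-\norm{\mu}_\unif$; you must use that $u_m=u$ on the whole middle block and run the comparison outward from each of its two endpoints, so that each excursion has length $\approx p_m$.
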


As in the discrete case condition \eqref{eq:SL_exp} states that restrictions of $a$ and $\mu$ to the three pieces $[-p_m,0]$, $[0,p_m]$ and $[p_m,2p_m]$
do not differ to much (indeed, the difference tends to zero faster than a given exponential).

\begin{corollary}
  Let $a\in L_\infty(\R)$ such that $\frac{1}{a}\in L_\infty(\R)$ and $\mu \in 
\M$.
  Assume that \eqref{eq:SL_exp} is satisfied for all $C>0$. Then $H_{(a,\mu)}$ does not have any eigenvalues.
\end{corollary}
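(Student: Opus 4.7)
The plan is to mirror the Jacobi proof in three steps: a transfer matrix setup with a periodic monodromy lemma, a quantitative Gronwall-type comparison lemma, and a Gordon-style endgame driven by the hypothesis~\eqref{eq:SL_exp}.

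First, to each solution $u$ of $H_{a,\mu}u = zu$ I would associate the state vector $\Phi(x) := \bigl(u(x), (au')(x\rlim)\bigr)^\top$, which is well defined for every $x\in\R$. Rewriting the equation as the integral system
\begin{align*}
u(x) &= u(0) + \int_0^x \tfrac{1}{a(s)} (au')(s\rlim)\, ds,\\
(au')(x\rlim) &= (au')(0\rlim) + z\int_0^x u(s)\, ds - \int_0^x u(s)\, d\mu(s),
\end{align*}
produces a two-parameter family $T_z(x,y)\in\SL(2,\C)$ with $\Phi(x) = T_z(x,y)\Phi(y)$ together with a crude bound $\norm{T_z(x,y)}\le e^{K\abs{x-y}}$ from the continuous Gronwall lemma in the appendix, where $K$ is chosen so that $2K\le C$ asymptotically once $\abs{z}<C^2\norm{\tfrac{1}{a}}_\infty-\norm{\mu}_\unif$. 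When $a,\mu$ are $p$-periodic, $T:=T_z(p,0)=T_z(2p,p)=T_z(0,-p)$ and $\det T=1$; applying Cayley--Hamilton $T^2-(\tr T)T+I=0$ to $\Phi(0)$ when $\abs{\tr T}\le 1$ and to $\Phi(-p)$ when $\abs{\tr T}>1$, exactly as in Lemma~\ref{lem:Jacobi_per}, yields $\max\set{\norm{\Phi(-p)},\norm{\Phi(p)},\norm{\Phi(2p)}}\ge\tfrac12\norm{\Phi(0)}$.

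Second, for two coefficient pairs $(a,\mu)$, $(\tilde a,\tilde\mu)$ with solutions $u,\tilde u$ sharing $\Phi(0)=\tilde\Phi(0)$, subtracting the two integral systems and splitting
\[\int_0^x u\,d\mu - \int_0^x\tilde u\,d\tilde\mu = \int_0^x(u-\tilde u)\,d\mu + \int_0^x\tilde u\,d(\mu-\tilde\mu)\]
produces a Volterra-type inequality $\norm{\Phi(x)-\tilde\Phi(x)}\le \int_0^x\alpha(s)\norm{\Phi(s)-\tilde\Phi(s)}\,ds + R(x)$, whose residual $R(x)$ is bounded by $e^{K\abs{x}}\bigl(\norm{a-\tilde a}_{L_1(-\abs{x},\abs{x})}+\norm{\mu-\tilde\mu}_{[-\abs{x},\abs{x}]}\bigr)\sup_{\abs{s}\le\abs{x}}\norm{\tilde\Phi(s)}$. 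The $L_1$ factor comes from $\abs{1/a-1/\tilde a}\le \norm{\tfrac1a}_\infty\norm{\tfrac1{\tilde a}}_\infty\abs{a-\tilde a}$, while the seminorm $\norm{\mu-\tilde\mu}_{[\cdot,\cdot]}$ emerges by rewriting $\int\tilde u\,d(\mu-\tilde\mu)$ via the antiderivatives $\phi_\mu,\phi_{\tilde\mu}$ and applying the proposition above Theorem~\ref{thm:SL} on a unit partition, with Lemma~\ref{lem:est_solutions} supplying the $L_\infty$-control of $\tilde u'$ on each unit interval. A final application of the Gronwall lemma converts the Volterra inequality into the desired absolute bound.

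Third, for the endgame, suppose $u\in D(H_{a,\mu})$ is an eigenfunction with $\abs{z}<C^2\norm{\tfrac{1}{a}}_\infty-\norm{\mu}_\unif$, normalised so $\norm{\Phi(0)}=1$. Let $a_m,\mu_m$ be the $p_m$-periodic extensions of the restrictions of $a,\mu$ to $(0,p_m]$, and let $u_m$ be the solution of $H_{a_m,\mu_m}u_m=zu_m$ with $\Phi_m(0)=\Phi(0)$. The first step gives $\sup_{\abs{x}\le 2p_m}\norm{\Phi_m(x)}\le e^{Cp_m}$ eventually in $m$, and feeding this together with \eqref{eq:SL_exp} into the comparison estimate forces $\sup_{\abs{x}\le 2p_m}\norm{\Phi(x)-\Phi_m(x)}\le\tfrac14$ for all sufficiently large $m$. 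Combined with the periodic lemma applied to $u_m$, this produces an $x_m\in\set{-p_m,p_m,2p_m}$ with $\norm{\Phi(x_m)}\ge\tfrac14$, so $\limsup_{\abs{x}\to\infty}\norm{\Phi(x)}\ge\tfrac14$, and Lemma~\ref{lem:est_solutions} rules out $u\in C_0(\R)$. The main obstacle will be the second step: the measure $\mu$ enters only through integrals $\int u\,d\mu$, and the quantity that \eqref{eq:SL_exp} actually controls is the weak Wasserstein-type seminorm $\norm{\mu-\tilde\mu}_I$, not the total variation. Producing this seminorm inside $R(x)$ demands a delicate integration-by-parts with $\phi_\mu-\phi_{\tilde\mu}$, an appropriate choice of the constants $c_{\mu,x}$ from the proposition preceding Theorem~\ref{thm:SL}, and simultaneous uniform $L_\infty$-control of $\tilde u'$ on unit intervals via Lemma~\ref{lem:est_solutions}.
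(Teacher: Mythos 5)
The statement you are asked to prove is, in the paper, an immediate consequence of Theorem \ref{thm:SL}: if \eqref{eq:SL_exp} holds for every $C>0$, then for every $C$ there are no eigenvalues of modulus below $C^2\norm{\frac{1}{a}}_\infty-\norm{\mu}_\unif$, and letting $C\to\infty$ sweeps out all of $\C$. Your proposal instead re-derives the theorem itself, following the same Gordon strategy the paper uses (transfer matrices, Cayley--Hamilton on the monodromy matrix, a comparison lemma, the $C_0(\R)$ endgame), so the architecture is right; but two of your quantitative claims do not hold as stated. First, the crude Gronwall bound $\norm{T_z(x,y)}\le e^{K\abs{x-y}}$ has rate $K\approx\norm{\frac1a}_\infty+\norm{\mu-z\lambda}_\unif$, which grows \emph{linearly} in $\abs{z}$; it is false that $2K\le C$ follows from $\abs{z}<C^2\norm{\frac1a}_\infty-\norm{\mu}_\unif$. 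The square-root-in-$\abs{z}$ growth rate needed for the quadratic eigenvalue bound is exactly the content of Lemma \ref{lemm:SL_est2} (the weighted energy $\omega^2\abs{u}^2+\abs{au'}^2$ together with smooth approximation of $\mu$), which your outline bypasses. For the corollary this error is repairable --- since \eqref{eq:SL_exp} holds for all $C$, you may simply choose $C>2K(z)$ for each fixed $z$ --- but you should say so explicitly, because as written your third step borrows the theorem's numerology and that borrowing is invalid.

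Second, and more seriously, you take $\mu_m$ to be the naive $p_m$-periodization of $\mu|_{(0,p_m]}$ and assert that \eqref{eq:SL_exp} controls $\norm{\mu-\mu_m}_{[-p_m,2p_m]}$. That is not automatic: the seminorm is tested against $W^1_\infty$ functions of support diameter at most $2$, and a test function straddling a period boundary (near $0$, $p_m$ or $2p_m$) cannot be rewritten as an admissible test function for $\mu-\mu(\cdot+p_m)$ on $[-p_m,p_m]$ without truncation, which destroys the Lipschitz bound. This boundary problem is precisely what Lemma \ref{lem:equiv_Gordon} resolves, by modifying the periodization near the endpoints while keeping $\1_{[\alpha_m,p_m-\alpha_m]}\mu_m=\1_{[\alpha_m,p_m-\alpha_m]}\mu$ and $\norm{\mu_m}_\lu\le\bigl(1+\tfrac{1}{2\alpha_m}\bigr)\norm{\mu}_\lu$; the paper then matches the initial data of $u_m$ at $\alpha_m$ rather than at $0$, so that the constant $c_{\mu-\mu_m,\alpha_m+1}$ appearing in Lemma \ref{lem:SL_diff2} vanishes, whereas your matching at $0$ leaves an uncontrolled term $c_{\mu-\mu_m,0}u(0)$. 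Without this construction (or a substitute for it) your comparison step has no input of the right form, so the proof as outlined does not close.
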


Taking into account the estimate in Lemma \ref{lem:est_solutions} we see that 
an $L_p(\R)$-eigenfunction for the $L_p(\R)$ operator is in fact an 
$C_0(\R)$-eigenfunction for the $C_0(\R)$ operator. Hence, we obtain the 
following corollary.

\begin{corollary}
	Let $1\leq p <\infty$, $a\in L_\infty(R)$ such that $\frac{1}{a}\in 
L_\infty(\R)$, $\mu\in\M$. Assume that \eqref{eq:SL_exp} holds true. Then 
$H_{a,\mu}$, considered as an operator in $L_p(\R)$, does not have any 
eigenvalues with modulus less than $C^2\norm{\frac{1}{a}}_\infty - 
\norm{\mu}_\unif$. In case \eqref{eq:SL_exp} holds true for all $C>0$, 
$H_{a,\mu}$ does not have any $L_p(\R)$-eigenvalues.
\end{corollary}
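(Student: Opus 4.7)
The plan is to transcribe the Jacobi proof into the continuum setting, replacing the discrete transfer matrices $T_z(m,n)$ by an $\SL(2,\C)$ fundamental solution of the first-order system attached to $H_{a,\mu}u=zu$, and controlling the measure perturbation through the weak norm $\|\cdot\|_I$ introduced before Theorem \ref{thm:SL}. For a solution $u$ I would set $v(t):=(au')(t\rlim)$ and $U(t):=(u(t),v(t))^\top$; the eigenvalue equation reads $u'(t\rlim)=v(t)/a(t)$ and $dv = u\,d(\mu-z\lambda)$, so there is a unique map $T_z(t,s)\in\SL(2,\C)$ with $U(t)=T_z(t,s)U(s)$ depending only on the restrictions of $a$ and $\mu$ to the interval between $s$ and $t$. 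A unit-interval Gronwall applied to $\|U\|$ yields
\[\|T_z(x+1,x)\|\leq \exp(\kappa(z)),\]
with $\kappa(z)$ built from $\|1/a\|_\infty$, $\|\mu\|_\unif$ and $\abs{z}$; the hypothesis $\abs{z}<C^2\|1/a\|_\infty-\|\mu\|_\unif$ is calibrated precisely so that $\kappa(z)<C/2$, hence $\|T_z(t,s)\|\leq e^{C\abs{t-s}/2}$ and in particular $\|T_z(t,s)\|\leq e^{3Cp_m/2}$ uniformly on $[-p_m,2p_m]$.

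The periodic analogue of Lemma \ref{lem:Jacobi_per} then carries over verbatim: if $a,\mu$ are $p$-periodic then $T:=T_z(p,0)=T_z(2p,p)=T_z(0,-p)$ lies in $\SL(2,\C)$, Cayley--Hamilton gives $T^2-(\tr T)T+\Id=0$, and applying this to $U(0)$ when $\abs{\tr T}\leq 1$ and to $U(-p)$ otherwise yields $\max\{\|U(n)\|;\;n\in\{-p,p,2p\}\}\geq \tfrac12\|U(0)\|$. For the perturbation analogue of Lemma \ref{lem:Jacobi_diff}, I construct $(a_m,\mu_m)$ as the $p_m$-periodic extension of $(a,\mu)|_{(0,p_m]}$ and let $u_m$ solve the corresponding eigenvalue equation with $U_m(0)=U(0)$. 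Then $U-U_m$ satisfies an integral equation whose homogeneous kernel is bounded by $\kappa(z)$ and whose inhomogeneity has Lebesgue part $(a-a_m)$ and measure part $d(\mu-\mu_m)$, each multiplied by a locally bounded function of $U_m$. The measure part is handled by replacing $\phi_{\mu-\mu_m}$ by a recentered version $\phi_{\mu-\mu_m}-c$, integrating by parts on each unit subinterval of $[-p_m,2p_m]$, and invoking the Proposition stated just before Theorem \ref{thm:SL}; the Lipschitz control of $u_m$ needed both at the boundary and inside each subinterval is furnished by Lemma \ref{lem:est_solutions}. Gronwall then produces
\[\|U(t)-U_m(t)\|\leq e^{3C\abs{t}/2}\,G(z)\bigl(\|a-a_m\|_{L_1(-\abs{t},\abs{t})}+\|\mu-\mu_m\|_{[-\abs{t},\abs{t}]}\bigr)\]
on $[-p_m,2p_m]$, with $G(z)$ polynomial in $\|1/a\|_\infty$, $\|\mu\|_\unif$ and $\abs{z}$.

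To conclude, $p_m$-periodicity gives $\|a-a_m\|_{L_1(-p_m,2p_m)}=\|a-a(\cdot-p_m)\|_{L_1(-p_m,p_m)}$ and the analogous identity for $\mu$, so the hypothesis \eqref{eq:SL_exp} forces the right-hand side above to $0$; normalizing $\|U(0)\|=1$, for large $m$ one has $\|U(n)-U_m(n)\|\leq 1/4$ for $n\in\{-p_m,p_m,2p_m\}$. The periodic lemma then gives $\|U(n)\|\geq 1/4$ for at least one such $n$, so $\limsup_{\abs{t}\to\infty}\|U(t)\|\geq 1/4$, and Lemma \ref{lem:est_solutions} together with $\|1/a\|_\infty<\infty$ yields $\limsup_{\abs{t}\to\infty}\abs{u(t)}>0$, contradicting $u\in C_0(\R)$. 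The main obstacle is the perturbation lemma: the singular part of $\mu-\mu_m$ cannot be controlled in total variation, and the recentered integration by parts against $\phi_{\mu-\mu_m}-c$ is precisely what lets the weak norm $\|\cdot\|_I$ enter the estimate in the form needed for \eqref{eq:SL_exp} to close the argument.
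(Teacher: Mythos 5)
Your proposal is really an attempted re-proof of Theorem \ref{thm:SL}, whereas the paper's proof of this corollary is a one-line reduction: by the reverse estimate in Lemma \ref{lem:est_solutions}, $\norm{u|_I}_\infty\le ((p+1)M)^{1/p}\norm{u|_I}_p$ on unit intervals $I$, so an $L_p(\R)$-eigenfunction tends to $0$ at $\pm\infty$ and is therefore a $C_0(\R)$-eigenfunction, to which Theorem \ref{thm:SL} applies directly. Your argument ends by ``contradicting $u\in C_0(\R)$'', but the hypothesis here is only $u\in L_p(\R)$; you never supply the step converting $L_p$-membership into decay (or, alternatively, converting $\limsup_{\abs{t}\to\infty}\abs{u(t)}>0$ into $u\notin L_p(\R)$, which again needs that reverse estimate). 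This reduction is the actual content of the corollary and is missing from your write-up.

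More seriously, the quantitative bookkeeping in your re-proof of the theorem does not close. The hypothesis $\abs{z}<C^2\norm{\frac{1}{a}}_\infty-\norm{\mu}_\unif$ is calibrated (via Lemma \ref{lemm:SL_est2}) so that the exponential growth rate of solutions is $\omega=\bigl(\norm{\frac{1}{a}}_\infty^{-1}(\norm{\mu}_\unif+\abs{z})\bigr)^{1/2}<C$ --- not $<C/2$ as you assert. Consequently your bound $\norm{U(t)-U_m(t)}\le e^{3C\abs{t}/2}G(z)(\cdots)$ with $\abs{t}$ up to $2p_m$ produces a factor $e^{3Cp_m}$, which is not killed by the $o(e^{-Cp_m})$ coming from \eqref{eq:SL_exp}; even with the correct rate $\omega<C$, propagating from the matching point $0$ out to $2p_m$ costs $e^{2\omega p_m}$, still too much. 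The paper obtains the constant $C$ (rather than $C/2$) precisely because the periodic approximant is arranged, via Lemma \ref{lem:equiv_Gordon}, to coincide with $\mu$ on $[\alpha_m,p_m-\alpha_m]$ and the initial data are matched there, so that $u_m=u$ on that interval and the perturbation estimate only has to be propagated a distance of about $p_m+\alpha_m$. Relatedly, your claimed ``analogous identity'' $\norm{\mu-\mu_m}_{[-p_m,2p_m]}=\norm{\mu-\mu(\cdot+p_m)}_{[-p_m,p_m]}$ for the naive periodization of $\1_{(0,p_m]}\mu$ is false for the weak norm: test functions straddling the period endpoints see both copies, and $\norm{\mu_m}_\unif$ is not controlled by $\norm{\mu}_\unif$ for the naive periodization. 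This is exactly what Lemma \ref{lem:equiv_Gordon} (quoted from \cite{SeifertVogt2014}) provides, and it requires modifying the measure near the endpoints --- hence the $\alpha_m$. So your scheme is repairable in outline, but as written it neither proves the theorem with the stated constant nor addresses the $L_p$ reduction that this corollary is actually about.
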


Again, we provide several lemmas for the proof of the theorem.

\begin{lemma}
  Let $a\in L_\infty(\R)$ such that $\frac{1}{a}\in L_\infty(\R)$, $\mu \in 
\M$, $z\in\C$.
  The following are equivalent:
  \begin{enumerate}
    \item
      $u$ is a solution of the equation $H_{a,\mu}u = zu$.
    \item
      For $s,t\in\R$ we have
      \[\begin{pmatrix}
	  u(t) \\
	  (au')(t\rlim)
	\end{pmatrix} = T_z(t,s)\begin{pmatrix}
	  u(s) \\
	  (au')(s\rlim)
	\end{pmatrix},\]
      where
      \[T_z(t,s) = \begin{pmatrix}
		    \uN(t;s) & \uD(t;s)\\
		    (a\uN'(\cdot;s))(t\rlim) & (a\uD'(\cdot;s))(t\rlim)
		    \end{pmatrix}\]
      and $\uN(\cdot;s)$, $\uD(\cdot;s)$ are the (Neumann and Dirichlet) solution(s) of $Hu = zu$ satisfying
      \begin{align*}
	\uN(s;s) & = 1 & \uD(s;s) & = 0 & \\
	(a\uN'(\cdot;s))(s\rlim) & = 0 & (a\uD'(\cdot;s))(s\rlim) & = 1 & \\
      \end{align*}
  \end{enumerate}
\end{lemma}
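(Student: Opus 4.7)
The plan is to read off (b) as the statement that the pair $(u,au'(\cdot\rlim))$ lives in the two-dimensional solution space spanned by the fundamental solutions $\uN(\cdot;s),\uD(\cdot;s)$, and the proof therefore reduces to existence, uniqueness, and linearity of the initial value problem for $H_{a,\mu}u=zu$ with prescribed data $(u(s),(au')(s\rlim))$. My first step is to record this existence-and-uniqueness statement. Setting $v(t):=(au')(t\rlim)$ and using the definition of solution (together with $A_{a,\mu}u$ from the remark preceding Proposition \ref{prop:Def_equal}), the equation $H_{a,\mu}u=zu$ is equivalent to the first-order integral system
\[
u(t)=u(s)+\int_s^t \tfrac{1}{a(r)}v(r)\,dr,\qquad v(t)=v(s)+\int_s^t u(r)\,d(\mu-z\lambda)(r),
\]
which, since $\tfrac{1}{a}\in L_\infty$ and $\mu-z\lambda\in\M$, is a Volterra-type fixed point problem solvable by Picard iteration on every compact subinterval; uniqueness follows from the discrete/continuum Gronwall lemma (Appendix). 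This yields the fundamental solutions $\uN(\cdot;s)$ and $\uD(\cdot;s)$ with the prescribed initial conditions at $s$, each lying in $W_{1,\loc}^1(\R)$, with $a\uN'(\cdot;s)(t\rlim)$ and $a\uD'(\cdot;s)(t\rlim)$ well-defined.

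For ``(a) $\Rightarrow$ (b)'', I fix $s\in\R$ and form $w:=u(s)\,\uN(\cdot;s)+(au')(s\rlim)\,\uD(\cdot;s)$. Linearity of the equation $H_{a,\mu}w=zw$ (immediate from the distributional definition, or from the integral system above) shows $w$ is itself a solution, and by construction $w(s)=u(s)$ and $(aw')(s\rlim)=(au')(s\rlim)$. The uniqueness part of step one therefore forces $u\equiv w$. Evaluating this identity at $t$ and also taking the quasi-derivative at $t\rlim$ is exactly the matrix identity in (b) with $T_z(t,s)$ as stated. Note that $\det T_z(t,s)=1$, since the Wronskian $\uN \cdot a\uD'(\cdot\rlim)-\uD \cdot a\uN'(\cdot\rlim)$ is constant in $t$ (by a short direct computation using the integral system, which I would carry out in a single line) and equals $1$ at $t=s$.

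For ``(b) $\Rightarrow$ (a)'', I read the matrix identity in the second column: $u(t)=u(s)\,\uN(t;s)+(au')(s\rlim)\,\uD(t;s)$ for all $t\in\R$. Since $\uN(\cdot;s)$ and $\uD(\cdot;s)$ are solutions of $H_{a,\mu}v=zv$, their linear combination $u$ is as well, proving (a).

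The step I expect to require the most care is step one, the existence and uniqueness of the fundamental solutions, because the measure $\mu$ may carry atoms, so $au'$ is only of locally bounded variation and one has to keep track of left versus right limits consistently (this is why the matrix entries are written with $t\rlim$). Once the Volterra system above is set up with the correct one-sided convention, Picard iteration and Gronwall take care of the rest, and the remainder of the lemma is essentially bookkeeping.
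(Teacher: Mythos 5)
Your proposal is correct and follows essentially the same route as the paper: the paper defines $T_z(t,s)$ abstractly as the (linear) solution map and reads off its matrix by applying it to the initial data of $\uN(\cdot;s)$ and $\uD(\cdot;s)$, which is the same argument as your construction of $w$ plus uniqueness, and the converse direction is identical. The only difference is that you sketch the existence/uniqueness/linearity of the measure-valued initial value problem (via the Volterra system and Gronwall), which the paper simply takes for granted.
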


\begin{proof}
  ``(a)$\Rightarrow$(b)'': Fix $s,t\in\R$ and let
      \[T_z(t,s)\from \begin{pmatrix}
	  u(s) \\
	  (au')(s\rlim)
	\end{pmatrix}\mapsto \begin{pmatrix}
	  u(t) \\
	  (au')(t\rlim)
	\end{pmatrix}.\]
	Then $T_z(t,s)$ is linear and can be represented by a matrix, which we will also denote by $T_z(t,s)$.
	By the initial conditions for the Neumann and Dirichlet solution we observe
	\begin{align*}
	T_z(t,s) & = T_z(t,s)\begin{pmatrix} 1 & 0\\0 & 1\end{pmatrix} = T_z(t,s) \begin{pmatrix}
		    \uN(s;s) & \uD(s;s)\\
		    (a\uN'(\cdot;s))(s\rlim) & (a\uD'(\cdot;s))(s\rlim)
		    \end{pmatrix}\\
		  & = \begin{pmatrix}
		    \uN(t;s) & \uD(t;s)\\
		    (a\uN'(\cdot;s))(t\rlim) & (a\uD'(\cdot;s))(t\rlim)
		    \end{pmatrix}.
	\end{align*}
  ``(b)$\Rightarrow$(a)'': For $s,t\in\R$ we have
  \begin{align*}
    u(t) & = \uN(t;s)\cdot u(s) + \uD(t;s)\cdot (au')(s\rlim),\\
    (au')(t\rlim) & = (a\uN'(\cdot;s))(t\rlim)\cdot u(s) + (a\uD'(\cdot;s))(t\rlim)\cdot (au')(s\rlim).
  \end{align*}
  Differentiating the second equality, taking into account that $\uN(\cdot;s)$ and $\uD(\cdot;s)$ are solutions and noting the first equality yields
  \begin{align*}
   -(au')' & = -(a\uN'(\cdot;s))' \cdot u(s) - (a\uD'(\cdot;s))'\cdot (au')(s\rlim) \\
   & = z\bigl(\uN(\cdot;s)\cdot u(s) + \uD(\cdot;s)\cdot (au')(s\rlim)\bigr) \\
   & \quad - \bigl(\uN(\cdot;s)\cdot u(s) + \uD(\cdot;s)\cdot (au')(s\rlim)\bigr)\mu \\
   & = zu - u\mu.
  \end{align*}
  Hence, $u$ is a solution of $H_{a,\mu} u = zu$.	
\end{proof}

\begin{lemma}
\label{lem:SL_est1}
  Let $a\in L_\infty(\R)$ such that $\frac{1}{a}\in L_\infty(\R)$, $\mu \in 
\M$, $u$ a solution of $H_{a,\mu} u = 0$. Then
  \[\abs{u(t)} + \abs{(au')(t\rlim)} \leq \bigl(\abs{u(0)} + \abs{(au')(0\rlim)}\bigr)e^{(\norm{\frac{1}{a}}_\infty+\norm{\mu}_\unif)(\abs {t}+1)} \quad(t\in\R).\]
\end{lemma}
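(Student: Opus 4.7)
The plan is to turn the distributional equation into a pair of coupled integral equations, then read the desired bound off a Gronwall-type inequality for measure-driven equations (the continuum analogue of Lemma~\ref{lem:Gronwall_discrete} from the appendix).

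First I would use the operator $A_{a,\mu}$ introduced in the earlier remark. Since $H_{a,\mu}u=0$ and $H_{a,\mu}=H_{a,\mu}^{\SL}$ by Proposition~\ref{prop:Def_equal}, we have $(A_{a,\mu}u)' = 0$, so $A_{a,\mu}u$ is constant and equal to $(au')(0\rlim)$. Unfolding the definition gives, for every $t\in\R$,
\[
(au')(t\rlim) \;=\; (au')(0\rlim) + \int_0^t u(s)\,d\mu(s),
\qquad
u(t) \;=\; u(0) + \int_0^t \tfrac{1}{a(s)}\,(au')(s\rlim)\,ds.
\]
Taking absolute values and adding, the function $f(t) := \abs{u(t)} + \abs{(au')(t\rlim)}$ then satisfies, for $t\ge 0$,
\[
f(t) \le f(0) + \int_0^t f(s)\,d\bigl(\tfrac{1}{\norm{a}}_{\mathrm{op}}\ldots\bigr)
\]
— more precisely,
\[
f(t) \le f(0) + \norm{\tfrac{1}{a}}_\infty\!\int_0^t f(s)\,ds + \int_0^t f(s)\,d\abs{\mu}(s),
\]
which is a Gronwall-type inequality driven by the positive measure $\norm{\tfrac{1}{a}}_\infty\lambda + \abs{\mu}$ on $(0,t]$.

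Next I would apply the continuum Gronwall lemma from the appendix to conclude
\[
f(t) \le f(0)\exp\!\Bigl(\norm{\tfrac{1}{a}}_\infty\, t + \abs{\mu}((0,t])\Bigr).
\]
To match the stated bound I would then estimate $\abs{\mu}((0,t])$ by covering $(0,t]$ with at most $\lceil t\rceil\le t+1$ unit intervals, each of $\abs{\mu}$-mass at most $\norm{\mu}_\unif$, yielding $\abs{\mu}((0,t])\le (t+1)\norm{\mu}_\unif$. Combining with $\norm{\tfrac{1}{a}}_\infty t\le \norm{\tfrac{1}{a}}_\infty(t+1)$ produces the exponent $(\norm{\tfrac{1}{a}}_\infty+\norm{\mu}_\unif)(t+1)$ for $t\ge 0$.

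For $t<0$ I would run the same argument on $[t,0]$: the constancy of $A_{a,\mu}u$ gives $(au')(0\rlim)-(au')(t\rlim)=\int_t^0 u(s)\,d\mu(s)$ and analogously for $u$, so $f(t)\le f(0)+\norm{\tfrac{1}{a}}_\infty\int_t^0 f(s)\,ds+\int_t^0 f(s)\,d\abs{\mu}(s)$, and the backward version of the same Gronwall lemma together with $\abs{\mu}([t,0])\le (\abs{t}+1)\norm{\mu}_\unif$ gives the bound with $\abs{t}+1$ in place of $t+1$. The main technical point — and the only step that is not bookkeeping — is invoking the correct measure-driven Gronwall lemma; the rest is a careful accounting of the $\abs{\mu}$-mass on intervals whose length is not an integer, which is why the $+1$ in $\abs{t}+1$ appears.
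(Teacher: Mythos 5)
Your approach is the same as the paper's: the two integral identities for $u$ and $(au')(\cdot\rlim)$, the sum $f(t)=\abs{u(t)}+\abs{(au')(t\rlim)}$, Gronwall for the measure $\norm{\frac{1}{a}}_\infty\lambda+\abs{\mu}$, and the unit-interval count producing the factor $\abs{t}+1$. So the skeleton is right, and the final bookkeeping is exactly what the paper does.

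There is one technical step that, as written, is not licensed by the appendix lemma, and it is precisely the point the paper takes care of. Lemma~\ref{lem:Gronwall_cont} assumes an inequality of the form $u(t)\le\alpha(t)+\int_{[0,t)}u\,d\mu$, with the driving integral taken over a set \emph{not} containing the endpoint $t$. With the paper's convention $\int_0^t\ldots d\mu=\int_{(0,t]}\ldots d\mu$ for $t>0$, your inequality for $t\ge 0$ reads $f(t)\le f(0)+\int_{(0,t]}f\,d\nu$, and the integration domain \emph{does} contain $t$. If $\abs{\mu}(\set{t})$ is large this form of Gronwall is simply false (the term $\nu(\set{t})f(t)$ on the right can swamp the left-hand side), so you cannot "apply the continuum Gronwall lemma" directly here. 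The paper circumvents this by introducing $\varphi_-(s)=\abs{u(s)}+\abs{(au')(s\llim)}$, which satisfies an inequality driven by the \emph{open} interval $(0,s)$, applying Gronwall to $\varphi_-$, and then letting $s\downarrow t$ to recover the right-limit quantity. Note also that the orientation of the extra care is the opposite of what you suggest: for $t\le 0$ the relevant interval $(t,0]$ already excludes the endpoint $t$, so Gronwall applies verbatim there, whereas it is the case $t>0$ that needs the left-limit detour. With that adjustment your argument is complete.
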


\begin{proof}
  Writing
  \begin{align*}
    u(t) & = u(0) + \int_0^t (au')(s\rlim)\frac{1}{a(s)}\,ds,\\
    (au')(t\rlim) & = (au')(0\rlim) + \int_0^t u(s)\, d\mu(s),
  \end{align*}
  we obtain for $\varphi(t):= \abs{u(t)} + \abs{(au')(t\rlim)}$ and $\nu:=\frac{1}{a}\lambda + \abs{\mu}$ the inequality
  \[\varphi(t)\leq \varphi(0) + \int_{(t,0]} \varphi(s)\, d\nu(s) \quad(t\leq 0).\]
  By Gronwall's inequality (see Lemma \ref{lem:Gronwall_cont}) we infer
  \[\varphi(t)\leq \varphi(0)e^{\nu((t,0])} \quad(t\leq 0).\]
  Since $\norm{\nu}_\unif\leq \norm{\frac{1}{a}}_\infty + \norm{\mu}_\unif$ and $\nu((t,0]) \leq \norm{\nu}_\unif(\abs{t}+1)$, we obtain the assertion for $t\leq 0$.
  
  For $t>0$ we set
  \[\varphi_-(s):= \abs{u(s)} + \abs{(au')(s\llim)} \leq \varphi(0) + \int_{(0,s)} \varphi_-(r)\,d\nu(r).\]
  The Gronwall's inequality in Lemma \ref{lem:Gronwall_cont} yields
  \[\abs{u(s)} + \abs{(au')(s\llim)} = \varphi_-(s) \leq \varphi(0)e^{\nu((0,s))} = \bigl(\abs{u(0)} + \abs{(au')(0\rlim)}\bigr)e^{\nu((0,s))}.\]
  For $s\downarrow t$ we the assertion follows, since $\nu((0,t])\leq \norm{\nu}_\unif(\abs{t}+1)$.
\end{proof}

\begin{lemma}
\label{lem:SL-diff1}
  Let $a,\tilde{a}\in L_\infty(\R)$ such that 
$\frac{1}{a},\frac{1}{\tilde{a}}\in L_\infty(\R)$, $\mu,\tilde{\mu}\in\M$, 
and $u$ and $\tilde{u}$ two solutions of
  $H_{a,\mu} u = 0$ and $H_{\tilde{a},\tilde{\mu}}\tilde{u} = 0$, respectively,
 satisfying
  \[\begin{pmatrix} u(0)\\(au')(0\rlim)\end{pmatrix} = \begin{pmatrix} \tilde{u}(0)\\(\tilde{a}\tilde{u}')(0\rlim)\end{pmatrix}.\]
  Then, for $s,t\in\R$ we have
  \begin{align*}
    & \begin{pmatrix} u(t)-\tilde{u}(t)\\(au')(t\rlim) - (\tilde{a}\tilde{u}')(t\rlim)\end{pmatrix} \\
    & = T_{\mu}(t,s) \begin{pmatrix} u(s)-\tilde{u}(s)\\(au')(s\rlim) - (\tilde{a}\tilde{u}')(s\rlim)\end{pmatrix} 
    + \int_s^t T_\mu(t,r)\begin{pmatrix}0 \\\tilde{u}(r)\end{pmatrix} \,d(\mu-\tilde{\mu})(r) \\
    & \quad + \int_s^t \bigg(\tfrac{1}{a(r)} - \tfrac{1}{\tilde{a}(r)}\bigg) T_\mu(t,r) \begin{pmatrix} (\tilde{a}\tilde{u}')(r\rlim)\\ 0\end{pmatrix}\,dr.
  \end{align*}  
\end{lemma}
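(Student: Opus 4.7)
The plan is to derive the identity by a Duhamel (variation of parameters) argument for the associated first-order measure-driven system. First, pass to the first-order system for $v := (u,(au')(\cdot\rlim))^\top$ and $\tilde v := (\tilde u,(\tilde a\tilde u')(\cdot\rlim))^\top$. Since $H_{a,\mu}u = 0$ means $(au')' = u\mu$ in the sense of distributions, integration yields
\[
u(t) - u(s) = \int_s^t \tfrac{1}{a(r)}(au')(r\rlim)\,dr, \qquad (au')(t\rlim) - (au')(s\rlim) = \int_s^t u\,d\mu,
\]
and analogously for $\tilde v$ with $\tilde a$, $\tilde\mu$.

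Next, I would express the $\tilde v$-system relative to the reference coefficients $(a,\mu)$: splitting $\tfrac{1}{\tilde a} = \tfrac{1}{a} + (\tfrac{1}{\tilde a} - \tfrac{1}{a})$ and $d\tilde\mu = d\mu + d(\tilde\mu - \mu)$ shows that $\tilde v$ satisfies the same integral system as a solution of the $(a,\mu)$-equation would, plus two explicit inhomogeneities. Subtracting from the homogeneous equation for $v$, the difference $w := v - \tilde v$ satisfies the $(a,\mu)$-system driven by the forcings
\[
\bigl(\tfrac{1}{a(r)} - \tfrac{1}{\tilde a(r)}\bigr)\!\begin{pmatrix}(\tilde a\tilde u')(r\rlim)\\ 0\end{pmatrix}dr \quad\text{and}\quad \begin{pmatrix}0\\\tilde u(r)\end{pmatrix}d(\mu - \tilde\mu)(r),
\]
which are precisely the inhomogeneities appearing on the right-hand side of the claim.

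Finally, I would close by variation of parameters using the propagator $T_\mu$ of the homogeneous $(a,\mu)$-equation. Let $R(t)$ denote the right-hand side of the claimed identity. Using the semigroup property $T_\mu(t,r)T_\mu(r,\sigma) = T_\mu(t,\sigma)$ together with the defining integral equation of $T_\mu$ itself, a Fubini computation shows that $R$ satisfies the same inhomogeneous integral system as $w$, while trivially $R(s) = T_\mu(s,s)w(s) = w(s)$. Applying Gronwall's Lemma~\ref{lem:Gronwall_cont} to $R - w$ (which then solves the homogeneous system with zero initial data) yields $R \equiv w$. I expect the main obstacle to be the Fubini step: since the forcing involves $d(\mu - \tilde\mu)$, the iterated integrals that arise from expanding $T_\mu(t,r)$ via its own integral representation must be exchanged at the level of measures rather than ordinary Lebesgue integrals. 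Once this is handled, the remainder is sign bookkeeping, which works out because $-(\tfrac{1}{\tilde a} - \tfrac{1}{a}) = \tfrac{1}{a} - \tfrac{1}{\tilde a}$ and $-d(\tilde\mu - \mu) = d(\mu - \tilde\mu)$.
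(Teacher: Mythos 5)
Your plan is correct, but it takes a genuinely different route from the paper's. The paper does not set up the inhomogeneous Volterra system for $w=v-\tilde v$ at all: it starts from the measure integral $\int_0^t T_\mu(r,0)^{-1}\bigl(0,\tilde u(r)\bigr)^\top d(\mu-\tilde\mu)(r)$, writes it out componentwise in terms of $\uN$ and $\uD$, and identifies it by integration by parts as $-\bigl[T_\mu(r,0)^{-1}\tilde v(r)\bigr]_{r=0}^{r=t}$ minus the $\bigl(\tfrac1a-\tfrac1{\tilde a}\bigr)$-integral; multiplying by $T_\mu(t,0)$ and using the cocycle identity then gives the claim outright. That computation needs only the product rule for a continuous factor ($\tilde u$, resp.\ $\uD$, $\uN$) against a right-continuous BV factor, so it requires neither an interchange of measure integrals nor a uniqueness statement for the measure-driven system. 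Your route --- show that the right-hand side solves the same integral system as $w$, then conclude by Gronwall --- is more systematic and would generalize, but it makes you pay exactly where you predicted. In the Fubini step the inner integral comes out over $(s,\sigma)$ rather than $(s,\sigma]$, so at common atoms of $\mu$ and $\mu-\tilde\mu$ an apparent diagonal correction term appears; it vanishes, but for a structural reason you should state explicitly: the atomic part of the system matrix (the $d\mu$ entry) reads off only the \emph{first}, continuous component of its argument, whereas the atomic part of your forcing is supported entirely in the \emph{second} component, so the product of the two atomic contributions is the zero vector. You would also need to reconcile the half-open intervals $(s,t]$ of your integral equation with the interval $[0,t)$ appearing in Lemma \ref{lem:Gronwall_cont}, and run the backward system for $t<s$ --- all routine, but precisely the bookkeeping the paper's integration-by-parts computation avoids.
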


\begin{proof}
  Without loss of generality, let $s=0$. Integrating by parts, we obtain
  \begin{align*}
    & \int_0^t T_\mu(r,0)^{-1}\begin{pmatrix} 0 \\ \tilde{u}(r)\end{pmatrix}\, d(\mu-\tilde{\mu})(r)
    = \begin{pmatrix} 
	- \int_0^t \uD(r)\tilde{u}(r)\,d(\mu-\tilde{\mu})(r) \\
       \int_0^t \uN(r)\tilde{u}(r)\,d(\mu-\tilde{\mu})(r)
      \end{pmatrix} \\
    & = \begin{pmatrix} u(0)\\(au')(0\rlim)\end{pmatrix} - T_\mu(0,t) \!\begin{pmatrix} \tilde{u}(t) \\(\tilde{a}\tilde{u}')(t\rlim)\end{pmatrix} - \int_0^t \bigl(\tfrac{1}{a(r)} \!-\! \tfrac{1}{\tilde{a}(r)}\bigr) T_\mu(0,r) \begin{pmatrix} (\tilde{a}\tilde{u}')(r\rlim)\\ 0\end{pmatrix}dr.
  \end{align*}
  Multiplying by $T_\mu(t,0)$ yields the assertion, since $T_\mu(t,0) T_\mu(r,0)^{-1} = T_\mu(t,r)$.
\end{proof}

\begin{lemma}
\label{lem:SL_diff2}
  Let $a,\tilde{a}\in L_\infty(\R)$ such that 
$\frac{1}{a},\frac{1}{\tilde{a}}\in L_\infty(\R)$, $\mu,\tilde{\mu}\in\M$, 
$c\in\C$ and $u$ and $\tilde{u}$ two solutions of
  $H_{a,\mu} u = 0$ and $H_{\tilde{a},\tilde{\mu}}\tilde{u} = 0$, respectively,
 satisfying
  \[\begin{pmatrix} u(0)\\(au')(0\rlim)\end{pmatrix} = \begin{pmatrix} \tilde{u}(0)\\(\tilde{a}\tilde{u}')(0\rlim) - c_{\mu-\tilde{\mu},0}u(0)\end{pmatrix}.\]
  Let $\alpha,\beta\in\Z$, $\alpha\leq -1$, $\beta\geq 1$. Let $c,\omega>0$ such that
  \[\abs{\uN(t,s)}, \abs{\partial_1 \uD(t\rlim,s)} \leq ce^{\omega\abs{t-s}} \quad(s,t\in\R).\]
  Then there exists a constant $C>0$ depending only on $\omega$ and $\norm{\tilde{\mu}}_\unif$ such that
  \[\abs{u(t) - \tilde{u}(t)} \leq Cce^{\omega \abs{t}} 
\norm{\tilde{u}|_{[\alpha,\beta]}}_\infty 
\bigl(\norm{(a-\tilde{a})|_{[\alpha,\beta]}}_{L_1(\R)}+\norm{\mu-\tilde{\mu}}_{
[\alpha,\beta]}\bigr) \quad(t\in[\alpha,\beta]).\]
\end{lemma}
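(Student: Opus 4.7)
The plan is to apply Lemma \ref{lem:SL-diff1} at $s=0$, then integrate the $d(\mu-\tilde{\mu})$-integral by parts against the shifted antiderivative $\phi_{\mu-\tilde{\mu}}-c_{\mu-\tilde{\mu},0}$ so that the boundary term at $r=0$ exactly cancels the contribution from the mismatched initial data, and finally estimate the two surviving integrals using the hypothesised exponential bounds together with the unit-interval $L_{1}$-control of $\phi_{\mu-\tilde{\mu}}-c_{\mu-\tilde{\mu},0}$ from the proposition quoted above.

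First I would read the first component of the identity in Lemma \ref{lem:SL-diff1} at $s=0$. The initial-condition assumption yields
\[
\begin{pmatrix} u(0)-\tilde u(0) \\ (au')(0\rlim) - (\tilde a\tilde u')(0\rlim)\end{pmatrix}
= \begin{pmatrix} 0 \\ -c_{\mu-\tilde{\mu},0}u(0)\end{pmatrix},
\]
so
\begin{align*}
u(t)-\tilde u(t) &= -c_{\mu-\tilde{\mu},0}\uD(t;0)u(0) + \int_{0}^{t}\uD(t;r)\tilde u(r)\,d(\mu-\tilde{\mu})(r) \\
&\quad + \int_{0}^{t}\bigg(\frac{1}{a(r)}-\frac{1}{\tilde a(r)}\bigg)\uN(t;r)(\tilde a\tilde u')(r\rlim)\,dr.
\end{align*}
Set $\psi(r):=\phi_{\mu-\tilde{\mu}}(r)-c_{\mu-\tilde{\mu},0}$, so that $d\psi = d(\mu-\tilde{\mu})$ and $\psi(0) = -c_{\mu-\tilde{\mu},0}$. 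The factorisation $T_{\mu}(t,r)=T_{\mu}(t,0)T_{\mu}(r,0)^{-1}$ gives
\[\uD(t;r) = -\uN(t;0)\uD(r;0) + \uD(t;0)\uN(r;0),\]
so $r\mapsto \uD(t;r)\tilde u(r)$ lies in $W_{1,\loc}^{1}(\R)$; Stieltjes integration by parts, together with $\uD(t;t)=0$ and $\tilde u(0)=u(0)$, produces a boundary term at $r=0$ equal to $c_{\mu-\tilde{\mu},0}\uD(t;0)u(0)$, which cancels the first summand, leaving
\[u(t)-\tilde u(t) = -\int_{0}^{t}\psi(r)\,\partial_{r}\!\bigl(\uD(t;r)\tilde u(r)\bigr)\,dr + \int_{0}^{t}\bigg(\frac{1}{a(r)}-\frac{1}{\tilde a(r)}\bigg)\uN(t;r)(\tilde a\tilde u')(r\rlim)\,dr.\]

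For the right integral I would use $|1/a-1/\tilde a|\leq \norm{1/a}_\infty\norm{1/\tilde a}_\infty|a-\tilde a|$, the hypothesis $|\uN(t;r)|\leq ce^{\omega|t-r|}\leq ce^{\omega|t|}$, and Lemma \ref{lem:est_solutions} applied to $\tilde u$ to bound $|(\tilde a\tilde u')(r\rlim)|$ by a constant times $\norm{\tilde u|_{[\alpha,\beta]}}_\infty$, producing the $\norm{(a-\tilde a)|_{[\alpha,\beta]}}_{L_{1}}$ factor. For the left integral, integrating the hypothesised bound on $\partial_{1}\uD$ yields $|\uD(t;s)|\leq (c/\omega)e^{\omega|t-s|}$, and combining the product decomposition of $\uD(t;r)$ above with Lemma \ref{lem:est_solutions} gives a pointwise bound of the form $|\partial_{r}(\uD(t;r)\tilde u(r))|\leq Ke^{\omega|t-r|}\norm{\tilde u|_{[\alpha,\beta]}}_\infty$, with $K$ depending on $\omega$ and $\norm{\tilde{\mu}}_\unif$. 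Splitting $[0,t]$ into unit intervals $[k,k+1]$ and invoking the proposition's estimate $\int_{k}^{k+1}|\psi(r)|\,dr\leq 2\max\{k+1,-k\}\norm{\mu-\tilde{\mu}}_{[\alpha,\beta]}$ reduces the remaining bound, after pulling out $e^{\omega|t|}$, to the convergent series $\sum_{k\geq 0}(k+1)e^{-\omega k}$; the case $t\in[\alpha,0]$ is symmetric.

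The main obstacle is the integration-by-parts step: one needs a Stieltjes antiderivative of the signed measure $\mu-\tilde{\mu}$ that both represents $d(\mu-\tilde{\mu})$ and is under local $L_{1}$-control (precisely what the shifted function $\psi$ provides via the proposition), and one needs absolute continuity in $r$ of the product $\uD(t;r)\tilde u(r)$, for which the transfer-matrix factorisation of $\uD(t;r)$ as a linear combination of two $W_{1,\loc}^{1}$ solutions of $H_{a,\mu}u=0$ is essential. The algebraic cancellation between the resulting boundary term and the mismatched initial data is exactly what motivates the peculiar shift $-c_{\mu-\tilde{\mu},0}u(0)$ in the hypothesis of the lemma.
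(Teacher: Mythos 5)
Your proposal is correct and follows essentially the same route as the paper: the paper obtains the identity $u(t)-\tilde u(t)=\int_0^t\bigl(c_{\mu-\tilde\mu,0}-\phi_{\mu-\tilde\mu}(s)\bigr)\tfrac{d}{ds}\bigl(\uD(t,s)\tilde u(s)\bigr)\,ds+\int_0^t\bigl(\tfrac{1}{a}-\tfrac{1}{\tilde a}\bigr)\uN(t,r)(\tilde a\tilde u')(r\rlim)\,dr$ by writing $\uD(t,r)\tilde u(r)=-\int_r^t\tfrac{d}{ds}(\uD(t,s)\tilde u(s))\,ds$ and applying Fubini, which is exactly your Stieltjes integration by parts with the same cancellation of the $c_{\mu-\tilde\mu,0}\uD(t;0)u(0)$ boundary term. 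The subsequent estimates (bounding $\uD$ by integrating $\partial_1\uD$, Lemma \ref{lem:est_solutions} for $\tilde a\tilde u'$, unit-interval splitting against the bound $\int_k^{k+1}\abs{\phi_{\mu-\tilde\mu}-c_0}\leq 2\max\{k+1,-k\}\norm{\mu-\tilde\mu}_{[\alpha,\beta]}$, and the convergent series) coincide with the paper's.
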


\begin{proof}
  By Lemma \ref{lem:SL-diff1} we obtain
  \begin{align*}
    u(t) - \tilde{u}(t) & = -\uD(t) c_{\mu-\tilde{\mu},0} u(0) + \int_0^t \uD(t,r)\tilde{u}(r)\,d(\mu-\tilde{\mu})(r) \\
    & \quad + \int_0^t \bigg(\frac{1}{a(r)} - \frac{1}{\tilde{a}(r)}\bigg)\uN(t,r)(\tilde{a}\tilde{u}')(r\rlim)\,dr.
  \end{align*}
  Since $\uD(t,t) = 0$ we have
  \[\uD(t,r)\tilde{u}(r) = -\int_r^t \frac{d}{ds} \bigl(\uD(t,s)\tilde{u}(s)\bigr)\, ds.\]
  Fubini's Theorem then implies
  \begin{align*}
    u(t) - \tilde{u}(t) & = \int_0^t \bigl(c_{\mu-\tilde{\mu},0} - \varphi_{\mu-\tilde{\mu}}(s)\bigr)\frac{d}{ds} \bigl(\uD(t,s)\tilde{u}(s)\bigr)\, ds \\
    & \quad + \int_0^t \bigg(\frac{1}{a(r)} - \frac{1}{\tilde{a}(r)}\bigg)\uN(t,r)(\tilde{a}\tilde{u}')(r\rlim)\,dr.
  \end{align*}
  We now estimate $\frac{d}{ds} \bigl(\uD(t,s)\tilde{u}(s)\bigr)$. From $\uD(s,s) = 0$ and the assumed bound on $\abs{\partial_1 \uD(t\rlim,s)}$ we obtain $\abs{\uD(t,s)} \leq \frac{c}{\omega}e^{\omega\abs{t-s}}$ for all $s,t\in\R$.
  Furthermore, by Lemma \ref{lem:est_solutions} we have $\norm{(\tilde{a}\tilde{u}')|_{[\alpha,\beta]}}_{\infty} \leq (2\norm{a}_\infty + \norm{\tilde{\mu}}_\unif)\norm{\tilde{u}|_{[\alpha,\beta]}}_\infty$.
  Noting that $\uD(t,s) = -\uD(s,t)$ we hence obtain
  \begin{align*}
    \abs{\frac{\partial^+}{\partial s} \bigl(\uD(t,s)\tilde{u}(s)\bigr)} & = \abs{-\partial_1 \uD(s\rlim,t)\tilde{u}(s) + \uD(t,s)\tilde{u}'(s\rlim)} \\
    & \leq C_0ce^{\omega\abs{t-s}}\norm{\tilde{u}|_{[\alpha,\beta]}}_\infty \quad(s,t\in[\alpha,\beta]),
  \end{align*}
  with $C_0=\norm{\frac{1}{\tilde{a}}}_\infty\bigl(1+\frac{1}{\omega}(2\norm{a}_\infty + \norm{\tilde{\mu}}_\unif)\bigr)$.
  For $t\in[0,\beta]$ we therefore estimate
  \begin{align*}
    &\abs{u(t) - \tilde{u}(t)}\\
    & \leq  \int_0^t \abs{c_{\mu-\tilde{\mu},0} - \varphi_{\mu-\tilde{\mu}}(s)}\abs{\frac{d}{ds} \bigl(\uD(t,s)\tilde{u}(s)\bigr)} \, ds \\
    & \quad + \int_0^t \abs{\tfrac{1}{a(r)} - \tfrac{1}{\tilde{a}(r)}} \abs{\uN(t,r)(\tilde{a}\tilde{u}')(r\rlim)} \,dr \\
    & \leq C_0 c \norm{\tilde{u}|_{[\alpha,\beta]}}_\infty \sum_{k=1}^\beta \int_{k-1}^k e^{\omega(t-s)} \abs{c_{\mu-\tilde{\mu},0} - \varphi_{\mu-\tilde{\mu}}(s)}\, ds \\
    & \quad + \norm{\frac{1}{a}}_\infty\norm{\frac{1}{\tilde{a}}}_\infty 
(2\norm{\tilde{a}}_\infty + \norm{\tilde{\mu}}_\unif) 
\norm{\tilde{u}|_{[\alpha,\beta]}}_\infty c e^{\omega t} \int_0^t  
\norm{\tilde{a}(r)-a(r)}\, dr \\ 
    & \leq C_0 c \norm{\tilde{u}|_{[\alpha,\beta]}}_\infty \sum_{k=1}^\beta e^{\omega(t+1-k)} 2k\norm{\mu-\tilde{\mu}}_{[\alpha,\beta]} \\
    & \quad + \norm{\frac{1}{a}}_\infty\norm{\frac{1}{\tilde{a}}}_\infty 
(2\norm{\tilde{a}}_\infty + \norm{\tilde{\mu}}_\unif) 
\norm{\tilde{u}|_{[\alpha,\beta]}}_\infty c e^{\omega t} \int_0^t  
\norm{\tilde{a}(r)-a(r)}\, dr \\ 
    & \leq Cc\norm{\tilde{u}|_{[\alpha,\beta]}}_\infty e^{\omega t} 
\bigl(\norm{\mu-\tilde{\mu}}_{[\alpha,\beta]} + 
\norm{(a-\tilde{a})|_{[\alpha,\beta]}}_{L_1(\R)}\bigr)
  \end{align*}
  where $C = C_0 \sum_{k=1}^\infty 2ke^{-\omega(k+1)} + 
\norm{\frac{1}{a}}_\infty\norm{\frac{1}{\tilde{a}}}_\infty 
(2\norm{\tilde{a}}_\infty + \norm{\tilde{\mu}}_\unif)c$.
  The proof for the case $t\in[\alpha,0)$ is analogous.
\end{proof}

\begin{lemma}
\label{lemm:SL_est2}
    Let $a\in L_\infty(\R)$ such that $\frac{1}{a}\in L_\infty(\R)$, $\mu \in 
\M$, $u$ a solution of $H_{a,\mu} u = 0$, $\omega:= 
\bigl(\norm{\mu}_\unif\norm{\frac{1}{a}}_\infty^{-1}\bigr)^{1/2}$.
    Then
    \[\bigl(\omega^2 \abs{u(t)}^2 + \abs{(au')(t\rlim)}^2\bigr)^{1/2} \leq \bigl(\omega^2 \abs{u(0)}^2 + \abs{(au')(0\rlim)}^2\bigr)^{1/2} e^{\omega(\abs{t}+1/2)} \quad(t\in\R).\]
\end{lemma}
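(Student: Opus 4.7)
The plan is an energy-type estimate for the weighted quantity $\psi(t) := \bigl(\omega^2 \abs{u(t)}^2 + \abs{(au')(t\rlim)}^2\bigr)^{1/2}$. Writing $p := au'$, the two scalar integral identities that encode $H_{a,\mu}u = 0$ are
\[
 u(t) = u(0) + \int_0^t \frac{p(s\rlim)}{a(s)}\, ds, \qquad p(t\rlim) = p(0\rlim) - \int_0^t u(s)\, d\mu(s).
\]
The key analytic input is the sharp Young inequality
\[
 2\omega\,\abs{u(s)}\,\abs{p(s\rlim)} \leq \omega^2\abs{u(s)}^2 + \abs{p(s\rlim)}^2 = \psi(s)^2,
\]
i.e.\ $2\abs{u}\abs{p}\leq\psi^2/\omega$. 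This is precisely what couples the two weights in the energy to the single scale $\omega$.

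Next I would expand $\omega^2\abs{u(t)}^2 - \omega^2\abs{u(0)}^2 = 2\omega^2 \Re\int_0^t \bar{u}(s)\, p(s\rlim)/a(s)\, ds$ from the first identity, and $\abs{p(t\rlim)}^2 - \abs{p(0\rlim)}^2$ via a Stieltjes formula against $d\mu$ (using the correct one-sided values of $p$ at atoms of $\mu$). Applying the pointwise bound $2\abs{u}\abs{p}\leq\psi^2/\omega$ in both contributions yields the integral Gronwall-type inequality
\[
 \psi(t)^2 \leq \psi(0)^2 + \int_{(0,t]} \psi(s)^2\, d\sigma(s), \qquad d\sigma := \frac{\omega}{\abs{a(s)}}\, ds + \frac{1}{\omega}\, d\abs{\mu}(s),
\]
for $t \geq 0$. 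The mixed measure $\sigma$ is designed so that its $ds$-part and $d\abs{\mu}$-part carry the same $\omega$-weight: the defining relation $\omega^2 = \norm{\mu}_\unif\norm{\frac{1}{a}}_\infty^{-1}$ forces $\omega\norm{\frac{1}{a}}_\infty = \norm{\mu}_\unif/\omega$.

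Applying the continuous Gronwall inequality from Lemma \ref{lem:Gronwall_cont} then gives $\psi(t)^2 \leq \psi(0)^2 e^{\sigma((0,t])}$. The crude estimate $\sigma((0,t]) \leq \omega\norm{\frac{1}{a}}_\infty t + \norm{\mu}_\unif(t+1)/\omega$, together with the balancing above, collapses to $2\omega(t+\tfrac12)$; taking a square root yields the asserted bound for $t \geq 0$. The case $t < 0$ is treated by running the same computation symmetrically on $[t,0]$, using the analogous backwards integral identities and the same Young and Gronwall steps.

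The main technical obstacle I expect is the Stieltjes manipulation of $\abs{p(t\rlim)}^2$ when $\mu$ carries atoms: since $p = au'$ is only of bounded variation, the expansion of $\abs{p}^2$ must be written with the correct one-sided limits so that the Young bound $2\abs{u(s)}\abs{p(s\rlim)}\leq\psi(s)^2/\omega$ remains pointwise valid at every jump (a similar bookkeeping is implicit in the proof of Lemma \ref{lem:SL_est1}). Once that is in place, the Gronwall step is essentially routine, and the constants fall into place thanks to the distinguished choice of the weight $\omega$.
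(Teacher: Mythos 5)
Your route is genuinely different from the paper's. The paper first proves the estimate when $\mu$ has a smooth density, by differentiating the energy $\phi=\omega^2\abs{u}^2+\abs{au'}^2$ (essentially your computation in the atom-free case), and then reaches general $\mu\in\M$ by approximation: it smooths $\mu$, uses Lemma \ref{lem:SL_diff2} to get locally uniform convergence of the corresponding solutions, and passes to the limit in the inequality. You instead work with the measure directly, via a Stieltjes expansion of $\abs{(au')(\cdot\rlim)}^2$ together with the measure Gronwall inequality of Lemma \ref{lem:Gronwall_cont}. The detour through smooth densities is not cosmetic: it is precisely how the paper sidesteps the atoms of $\mu$, and the atoms are where your argument has a real gap, not just bookkeeping.

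Concretely, at an atom $s$ the jump of $p=au'$ is $\Delta=u(s)\mu(\{s\})$ and $\abs{p(s\rlim)}^2-\abs{p(s\llim)}^2=\Re\bigl((\overline{p(s\llim)}+\overline{p(s\rlim)})\Delta\bigr)$. Your right-limit Young bound does give the integral inequality $\psi(t)^2\leq\psi(0)^2+\int_{(0,t]}\psi(s\rlim)^2\,d\sigma(s)$, but the integral is then necessarily closed at the atoms, and the Gronwall iteration produces closed simplices $\set{s_k\leq\dots\leq s_1}$ whose $\sigma^{\otimes k}$-mass at an atom of relative mass $y=\abs{\mu}(\{s\})/\omega$ is $y^k$ rather than $y^k/k!$; the resulting per-atom factor is $\frac{1}{1-y}$, which exceeds $e^{y}$ and is useless for $y\geq1$ (Lemma \ref{lem:Gronwall_cont} deliberately uses the open simplex and open interval $(s,t)$, which your closed-interval inequality does not match). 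Using left limits instead, the extra term $\abs{\Delta}^2$ yields the per-atom factor $1+y+y^2$, which is strictly larger than $e^{y}$ for all $0<y\lesssim 1.79$. Either way you do not recover the claimed constant $e^{\sigma((0,t])}$. The single-jump estimate is in fact true with factor $e^{y}$ --- optimizing over the relative position of $p(s\llim)$ and $\Delta$ gives the sharp factor $\bigl(\tfrac{y+\sqrt{y^2+4}}{2}\bigr)^2\leq e^{y}$ --- but that is an additional idea you would have to supply and splice into the Gronwall argument atom by atom. So either prove and use this sharper jump estimate, or follow the paper's smoothing-and-limit route.
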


\begin{proof}
  Without loss of generality, let $\mu\neq 0$ (the case $\mu=0$ is trivial).
  
  (i) 
  We first assume that $\mu = \rho\lambda$ with a density $\rho\in C(\R)$. Then $au'\in C^1(\R)$ and $(au')' = \rho u$. Let $\phi(t):= \omega^2\abs{u(t)}^2 + \abs{(au')(t)}^2$. Then
  \begin{align*}
  	\abs{\phi'(t)} & = \abs{2\Re\bigl(\bigl(\tfrac{\omega}{\overline{a(t)}} + \abs{\rho(t)}\bigr)u(t) \overline{(au')(t)}\bigr)} \leq \bigl(\tfrac{\omega}{\abs{a(t)}} + \tfrac{\abs{\rho(t)}}{\omega}\bigr)\varphi(t).
  \end{align*}
  Hence, $\varphi(t)\leq \varphi(s)\exp(\omega\norm{\frac{1}{a}}_\infty\abs{t-s} + \frac{1}{\omega}\int_s^t\rho(r)\,dr)$ and therefore
  \[\bigl(\omega^2 \abs{u(t)}^2 + \abs{(au')(t\rlim)}^2\bigr)\leq \bigl(\omega^2 \abs{u(s)}^2 + \abs{(au')(s)}^2\bigr)e^{\omega\norm{\tfrac{1}{a}}_\infty\abs{t-s} + \frac{1}{\omega}\abs{\mu}([s,t])}\]
  for all $s,t\in\R$, $s<t$.
  
  (ii)
  By \cite[Proposition 2.5]{SeifertVogt2014} there exists $(\mu_n)$ in $\M$ such that $\mu_n$ has a smooth density and $\norm{\mu_n}_\unif\leq \norm{\mu}_\unif$ for all $n\in\N$, $\norm{\mu_n-\mu}_\R\to 0$ and $\limsup_{n\to\infty} \abs{\mu_n}(I) \leq \abs{\mu}(I)$ for all compact intervals $I\subseteq \R$.
  Then \cite[Lemma 2.4]{SeifertVogt2014} implies $\1_[\alpha,\beta]\mu_n\to \1_{[\alpha,\beta]}\mu$ weakly for all $\alpha,\beta\in\R$ such that $\mu(\{\alpha\}) = \mu(\{\beta\}) = 0$.
  
  (iii)
  For $n\in\N$ let $u_n$ be the solution of $H_{a,\mu_n}u_n = 0$ such that 
$u_n(0) = u(0)$, $(au_n)'(0\rlim) = (au')(0\rlim) + c_{\mu-\mu_n,0}u(0)$.
  By Lemma \ref{lem:SL_est1}, $(u_n)$ is uniformly bounded on any compact interval, so Lemma \ref{lem:SL_diff2} implies $u_n\to u$ locally uniformly. Hence, for $s,t\in\R$ with $\mu(\{s\}) = \mu(\{t\}) = 0$ we obtain
  \[(au_n')(t) - (au_n')(s) = \int_s^t u_n(r) \,d\mu_n(r) \to \int_s^t u(r)\,d\mu(r) = (au')(t\rlim) - (au')(s\rlim).\]
  By Lemma \ref{lem:est_solutions} also $(au_n')$ is uniformly bounded on $[0,1]$, so dividing by $a(s)$ and integration with respect to $s$ yields
  \begin{align*}
    (au_n')(t)\int_0^1 \frac{1}{a(s)}\,ds  - \bigl(u_n(1) - u_n(0)\bigr) & \to (au')(t\rlim)\int_0^1 \frac{1}{a(s)}\,ds  - \bigl(u(1) - u(0)\bigr),
  \end{align*}
  so $(au_n')(t) \to (au')(t\rlim)$.
  
  (iv)
  Let $t>s>0$ such that $\mu(\{s\}) = \mu(\{t\}) = 0$. By (i) we have
  \[\bigl(\omega^2 \abs{u_n(t)}^2 + \abs{(au_n')(t)}^2\bigr)\leq \bigl(\omega^2 \abs{u_n(s)}^2 + \abs{(au_n')(s)}^2\bigr)e^{\omega\norm{\tfrac{1}{a}}_\infty\abs{t-s} + \frac{1}{\omega}\abs{\mu_n}([s,t])}.\]
  Taking the limit $n\to\infty$ noting (ii) we obtain
  \[\bigl(\omega^2 \abs{u(t)}^2 + \abs{(au')(t\rlim)}^2\bigr)\leq \bigl(\omega^2 \abs{u(s)}^2 + \abs{(au')(s\rlim)}^2\bigr)e^{\omega\norm{\tfrac{1}{a}}_\infty\abs{t-s} + \frac{1}{\omega}\abs{\mu}([s,t])}.\]
  
  (v)
  For $t>0$ there exist sequences $s_n \in [0,t)$ and $(t_n)$ in $[t,\infty)$ 
such that $s_n\to 0$, $t_n\to t$ and $\mu(\{s_n\}) = \mu(\{t_n\}) = 0$ for 
all $n\in\N$.
  Thus, from (iv) we deduce
  \[\bigl(\omega^2 \abs{u(t)}^2 + \abs{(au')(t\rlim)}^2\bigr)\leq \bigl(\omega^2 \abs{u(0)}^2 + \abs{(au')(0\rlim)}^2\bigr)e^{\omega\norm{\tfrac{1}{a}}_\infty\abs{t} + \frac{1}{\omega}\abs{\mu}((0,t])}.\]
  Plugging in $\omega = 
\bigl(\norm{\mu}_\unif\norm{\frac{1}{a}}_\infty^{-1}\bigr)^{1/2}$ yields the 
assertion for $t\geq 0$. The case $t<0$ is proved analogously.
\end{proof}

%

We can proceed with the continuum version of Lemma \ref{lem:Jacobi_per}.
The proof is analogous to the discrete case in Lemma \ref{lem:Jacobi_per}, so we omit it here.

\begin{lemma}
\label{lem:SL_per}
  Let $a\in L_\infty(\R)$ such that $\frac{1}{a}\in L_\infty(\R)$, $\mu \in \M$
 and $p>0$ such that $a$ and $\mu$ are $p$-periodic. Let $z\in\C$ and $u$ a 
solution of $H_{a,\mu}u = zu$. Then
  \[\max\set{\norm{\begin{pmatrix} u(t)\\(au')(t\rlim)\end{pmatrix}};\; t\in\set{-p,0,2p}} \geq \frac{1}{2}\norm{\begin{pmatrix} u(0)\\(au')(0\rlim)\end{pmatrix}}.\]
\end{lemma}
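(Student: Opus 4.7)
The plan is to mirror the proof of Lemma~\ref{lem:Jacobi_per} line by line, replacing discrete transfer matrices by the continuum $T_z(t,s)$. First I would use $p$-periodicity of $a$ and $\mu$ to conclude that
\[
  T_z(p,0) = T_z(2p,p) = T_z(0,-p) =: T,
\]
since each of these matrices is assembled from Neumann and Dirichlet solutions on an interval that is a $p$-translate of $[0,p]$, and the coefficients of $H_{a,\mu}$ are invariant under translation by $p$. Next I would record that $\det T = 1$: the Wronskian
\[
  W(t) := \uN(t;s)(a\uD'(\cdot;s))(t\rlim) - \uD(t;s)(a\uN'(\cdot;s))(t\rlim)
\]
is constant in $t$ (its distributional derivative vanishes since both $\uN(\cdot;s)$ and $\uD(\cdot;s)$ solve $H_{a,\mu}u = zu$; this is part of the Sturm--Liouville framework, cf.~\cite{EckhardtTeschl2011}), and the prescribed initial conditions give $W(s)=1$.

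Set $v_t := \bigl(u(t),(au')(t\rlim)\bigr)^\top$, so that periodicity gives $v_p = Tv_0$, $v_{2p} = T^2v_0$ and $v_{-p} = T^{-1}v_0$. Because $\det T = 1$, Cayley--Hamilton yields
\[
  T^2 - (\tr T)\, T + \Id = 0.
\]
If $\abs{\tr T} \leq 1$, I apply this identity to $v_0$ to get $v_{2p} - (\tr T) v_p + v_0 = 0$, so
\[
  \norm{v_0} \leq \abs{\tr T}\norm{v_p} + \norm{v_{2p}} \leq \norm{v_p} + \norm{v_{2p}},
\]
and hence $\max\{\norm{v_p},\norm{v_{2p}}\} \geq \tfrac12 \norm{v_0}$. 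If $\abs{\tr T} > 1$, I apply the identity to $v_{-p}$ to obtain $v_p + v_{-p} = (\tr T) v_0$, so $\norm{v_0} \leq \abs{\tr T}\norm{v_0} \leq \norm{v_p} + \norm{v_{-p}}$, and hence $\max\{\norm{v_{-p}},\norm{v_p}\} \geq \tfrac12 \norm{v_0}$. In either case the asserted maximum over $t \in \{-p, p, 2p\}$ (the natural reading of the index set, matching the discrete lemma) dominates $\tfrac12 \norm{v_0}$.

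There is no real obstacle beyond what already appeared in the discrete proof; the whole argument is algebraic once the transfer matrix formalism is in place. The only step that requires attention specific to the continuum case is the identity $\det T_z(t,s) = 1$ when $\mu$ is merely a measure, but this follows from the standard fact that the Wronskian has zero distributional derivative: the jump contributions from $\mu$ enter symmetrically via $a\uN'$ and $a\uD'$ and cancel.
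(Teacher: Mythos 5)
Your proof is correct and is exactly the argument the paper intends: the paper omits the proof of Lemma~\ref{lem:SL_per} with the remark that it is analogous to Lemma~\ref{lem:Jacobi_per}, and you have supplied precisely that analogue, including the Wronskian justification of $\det T = 1$ for measure coefficients. You also correctly read the index set as $\{-p,p,2p\}$ (the ``$0$'' in the printed statement is a typo, as the application in the proof of Theorem~\ref{thm:SL} confirms), since with $0$ included the claim would be trivial.
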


%

\begin{lemma}[see {\cite[Lemma 5.1]{SeifertVogt2014}}]
\label{lem:equiv_Gordon}
  Let $\mu\in\M$, $C> 0$. Assume there exists $(p_m)$ in $(0,\infty)$ with $p_m\to \infty$ such that
  \[e^{Cp_m} \norm{\mu-\mu(\cdot+p_m)}_{[-p_m,p_m]}\to 0.\]
  Then there exists $(\mu_m)$ in $\M$ such that
      $\mu_m$ is periodic with period $p_m$ ($m\in\N$), and
      \[
        e^{Cp_m} \norm{\mu-\mu_m}_{[-p_m,2p_m]} \to 0 \qquad (m\to\infty).
      \]
  Moreover, the measures $\mu_m$ can be chosen such that
  \[
    \1_{[\alpha_m,p_m-\alpha_m]} \mu_m = \1_{[\alpha_m,p_m-\alpha_m]} \mu, \quad
    \norm{\mu_m}_\lu \le \bigl(1+\tfrac{1}{2\alpha_m}\bigr)\norm{\mu}_\lu
  \]
  for all $m\in\N$, with $0 < \alpha_m \le \frac{p_m}{2}$ and $\inf_{m\in\N} \alpha_m>0$.
\end{lemma}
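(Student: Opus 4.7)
The plan is to build a periodic $\mu_m$ by keeping $\mu$ unchanged on the core $[\alpha_m, p_m-\alpha_m]$ of one period and replacing it by a Lebesgue-type filler on the two buffer strips $[0,\alpha_m]$ and $[p_m-\alpha_m,p_m]$, then extending with period $p_m$ to all of $\R$. I would take $\alpha_m := \min\{1,p_m/2\}$, which is bounded below by $1$ for $p_m\geq 2$ and at most $p_m/2$ by construction, so $\inf_m \alpha_m > 0$.

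The buffer fillers are chosen as scalar multiples of Lebesgue measure whose total masses are prescribed so that $\mu_m$ has the same total mass as $\mu$ on one full period and so that $\phi_{\mu_m}$ matches $\phi_\mu$ up to an additive constant at the endpoints $0$ and $p_m$ of one period. Each such replacement mass on a buffer is at most $\alpha_m\norm{\mu}_\lu$, yielding a filler density of at most $\norm{\mu}_\lu$, from which the bound $\norm{\mu_m}_\lu \leq \bigl(1+\tfrac{1}{2\alpha_m}\bigr)\norm{\mu}_\lu$ follows by summing the contributions from the core and the at most one buffer hitting a given unit window.

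For the approximation estimate I would reduce $\norm{\mu-\mu_m}_{[-p_m,2p_m]}$ to an $L_1$-integral of $\phi_{\mu-\mu_m}$ minus a constant via the preceding proposition. On the central core $[\alpha_m,p_m-\alpha_m]$ the difference vanishes; on the two outer cores $[-p_m+\alpha_m,-\alpha_m]$ and $[p_m+\alpha_m,2p_m-\alpha_m]$, periodicity of $\mu_m$ together with the core identity yield $\mu-\mu_m=\mu-\mu(\cdot+p_m)$ and $\mu-\mu(\cdot-p_m)$ respectively, both controlled by the hypothesis $\norm{\mu-\mu(\cdot+p_m)}_{[-p_m,p_m]}$. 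The buffer replacement mass was chosen precisely so that the piecewise-affine slope of $\phi_{\mu_m}$ there picks up the correct boundary value of $\phi_\mu$, making the residual error on each buffer of the same order as $\norm{\mu-\mu(\cdot+p_m)}_{[-p_m,p_m]}$. Summing over the finitely many buffers in $[-p_m,2p_m]$ and multiplying through by $e^{Cp_m}$ then yields the claimed decay.

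The main obstacle is the bookkeeping showing that the buffer contribution is $O\bigl(\norm{\mu-\mu(\cdot+p_m)}_{[-p_m,p_m]}\bigr)$ rather than $O(\norm{\mu}_\lu)$: naively the filler mass in each buffer is of order $\norm{\mu}_\lu$, which alone would be too large after multiplication by $e^{Cp_m}$. Matching the two buffer masses to the actual values of $\phi_\mu$ at $0$ and $p_m$, and then using near-periodicity to compare $\phi_\mu(0)$ with $\phi_\mu(p_m)-\phi_\mu(p_m)+\phi_\mu(0)$ via the $\phi_\mu$-characterization of the test-function norm, is exactly the content absorbing this discrepancy and is the technical heart of the argument.
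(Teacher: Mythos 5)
The paper does not actually prove this lemma---it is quoted verbatim from \cite[Lemma 5.1]{SeifertVogt2014}---so your proposal has to be judged on its own merits, and there it has a fatal flaw at exactly the point you flag as ``the technical heart.'' Replacing $\1_{[0,\alpha_m]}\mu$ by a scalar multiple of Lebesgue measure on the buffer cannot yield $e^{Cp_m}\norm{\mu-\mu_m}_{[-p_m,2p_m]}\to 0$, no matter how the filler mass is calibrated. The norm $\norm{\cdot}_I$ is weak, but not that weak: it is a Wasserstein-type distance tested against $1$-Lipschitz functions, and it separates an atom from a uniformly spread measure of the same total mass at the scale of that mass. Concretely, if $\1_{[0,\alpha_m]}\mu=c\,\delta_{1/2}$ and your filler is $\tfrac{c}{\alpha_m}\lambda|_{[0,\alpha_m]}$, then the test function $u(t)=(\tfrac12-\abs{t-\tfrac12})^+$ gives $\abs{\int u\,d(\mu-\mu_m)}\geq c/4$, so $\norm{\mu-\mu_m}_{[-p_m,2p_m]}$ stays bounded below by a constant of order $\norm{\mu}_\lu$ and the factor $e^{Cp_m}$ destroys everything. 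Matching total masses (or endpoint values of $\phi_\mu$) only controls the zeroth moment of the difference, whereas the norm sees the full distribution; your concluding comparison ``$\phi_\mu(0)$ with $\phi_\mu(p_m)-\phi_\mu(p_m)+\phi_\mu(0)$'' is a tautology and does not close this gap.

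The construction has to keep $\mu_m$ equal to a \emph{translate of $\mu$ itself} on (essentially all of) the buffers, so that there the defect $\mu-\mu_m$ is a restriction of $\mu-\mu(\cdot\pm p_m)$, which is what the hypothesis controls; the only genuinely new mass one may introduce per period is a small absolutely continuous correction (of total mass bounded by a single constant $c_{\nu_m,\cdot}$, not by $\alpha_m\norm{\mu}_\lu$) spread over length $2\alpha_m$, together with a careful choice of where to cut the period so that the seam defect of the periodization is of order $\norm{\mu-\mu(\cdot+p_m)}_{[-p_m,p_m]}$. That is also the only way to get the stated bound $\norm{\mu_m}_\lu\le(1+\tfrac{1}{2\alpha_m})\norm{\mu}_\lu$: your bookkeeping (filler density up to $\norm{\mu}_\lu$ on a buffer of length $\alpha_m\le 1$) only yields $2\norm{\mu}_\lu$, and your fixed choice $\alpha_m=\min\{1,p_m/2\}$ would in any case be useless in the proof of Theorem \ref{thm:SL}, which needs $\alpha_m\to\infty$ so that $\norm{\mu_m}_\unif\to\norm{\mu}_\unif$.
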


\begin{proof}[Proof of Theorem \ref{thm:SL}]
  Without loss of generality, let $p_m\geq 4$ for all $m\in\N$. 
  Let $(\mu_m)$ and $(\alpha_m)$ as in Lemma \ref{lem:equiv_Gordon} such that $p_m+\alpha_m\in\N$ for all $m\in\N$, $\alpha_m\to \infty$ and $\frac{\alpha_m}{p_m}\to 0$.

  Assume that $z\in\C$ with $\abs{z}< C^2\norm{\frac{1}{a}}_\infty - \norm{\mu}_\unif$ is an eigenvalue of $H_{a,\mu}$.
  Let $u\neq 0$ be corresponding eigenfunction. Then $u\in C_0(\R)$.
  For $m\in\N$ let $u_m$ be the solution of $H_{a_m,\mu_m} u_m = zu_m$ satisfying $u_m(\alpha_m) = u(\alpha_m)$, $(a_m u_m')(\alpha_m\rlim) = (au')(\alpha_m')$.
  Then $u_m = u$ on $[\alpha_m,p_m-\alpha_m]$, since $\mu_m = \mu$ on this interval. Note that $c_{\mu-\tilde{\mu},\alpha_m+1} = 0$, since $\1_{[\alpha_m,\alpha_m+2]}(\mu_m-\mu) = 0$.
  By Lemma \ref{lem:SL_diff2}, for $t\in[-p_m,\alpha_m]$ we obtain
  \[\abs{u(t) - u_m(t)} \leq C_m 
e^{\omega_m\abs{t-(\alpha_m+1)}}\bigl(\norm{a-a_m}_{L_1(-p_m,\alpha_m+1)} + 
\norm{\mu-\mu_m}_{[-p_m,\alpha_m+1]}\bigr)\]
  where $\omega_m = \bigl(\norm{\mu_m-z\lambda}_\unif \norm{\frac{1}{a_m}}_\infty^{-1}\bigr)^{1/2}$ as in Lemma \ref{lemm:SL_est2}, and $C_m$ is only depending on $\omega_m$, $\norm{\frac{1}{a}}_\infty$, $\norm{\frac{1}{a_m}}_\infty$, $\norm{\mu}_\unif$ and $\norm{a}_\infty$, and similarly for $t\in [p_m-\alpha_m,2p_m]$.
  Hence,
  \begin{align}
  \label{eq:SL_conv}
    & \sup_{t\in[-p_m,2p_m]} \abs{u(t) - u_m(t)} \nonumber \\
    & \leq C_m e^{\omega_m (p_m+\alpha_m+1)}\bigl(\norm{a-a_m}_{L_1(-p_m,2p_m)}
 + \norm{\mu-\mu_m}_{[-p_m,2p_m]}\bigr).
  \end{align}
  Since $\norm{\frac{1}{a_m}}_\infty^{-1} \leq \norm{\frac{1}{a}}_\infty^{-1}$,
 we have
  \begin{align*}
    \omega_m^2 & = \norm{\frac{1}{a_m}}_\infty^{-1} \bigl(\norm{\mu_m}_\unif + 
\abs{z}\bigr) \leq \norm{\frac{1}{a}}_\infty^{-1} 
\bigl(\bigl(1+\tfrac{1}{2\alpha_m}\bigr)\norm{\mu}_\unif + \abs{z}\bigr) \\
    & \to \norm{\frac{1}{a}}_\infty^{-1} \bigl(\norm{\mu}_\unif + \abs{z}\bigr) < C^2,
  \end{align*} 
  so for large $m$ we obtain
  \[\omega_m(p_m+\alpha_m+1) \leq Cp_m.\]
  Thus, for $\varepsilon>0$ there exists $m_0\in\N$ such that such that $\abs{u(t)-u_m(t)}\leq \varepsilon$ for all $m\geq m_0$ and $t\in[-p_m,2p_m]$.
  Since $u\in C_0(\R)$, we there exists $m_1\geq m_0$ such that $\abs{u(t)}\leq \varepsilon$ for $\abs{t}\geq p_{m_1}-1=:t_1$. Then $\abs{u_m}\leq 2\varepsilon$ on $[-p_m,2p_m]\setminus (-t_1,t_1)$, for all $m\geq m_1$.
  By Lemma \ref{lem:est_solutions} we obtain $\abs{a_mu_m'}\leq 2\varepsilon(2\norm{a_m}_\infty + \norm{\mu_m-z\lambda}_\unif)$ on that set.
  Hence,
  \[\bigl(u_m(\pm p_m), (a_mu_m')(\pm p_m\rlim)\bigr),\bigl(u_m(2p_m), (a_mu_m')(2 p_m\rlim)\bigr) \to 0 \quad(m\to\infty).\]
  Lemma \ref{lem:SL_per} yields $\bigl(u_m(0), (a_mu_m')(0\rlim)\bigr)\to 0$. 
By Lemma \ref{lem:SL_est1} we now obtain $u_m\to 0$ locally uniformly. Since 
$u_m\to u$ locally uniformly by \eqref{eq:SL_conv}, we obtain $u=0$, a 
contradiction.
%
\end{proof}

\begin{remark}
  As shown in \cite[Remark 5.7 and Section 6]{SeifertVogt2014} for Schr\"odinger operators (i.e.\ $a=1$), the eigenvalue bound $C^2 - 
  \norm{\mu}_\unif$ can be sharpened to the optimal bound $C_\mu^2-\inf_{r>0} \norm{\mu}_{\unif,r}$, where
  \[C_\mu = -\liminf_{p\to\infty} \frac{1}{p} \ln \norm{\mu-\mu(\cdot+p)}_{[-p,p]}\]
  and
  \[\norm{\mu}_{\unif,r} := \frac{1}{r}\sup_{t\in\R} \abs{\mu}\bigl((a,a+r]\bigr).\]
  An analogous sharpening can also be done in our case, also yielding optimal bounds.
\end{remark}

\appendix

\section{Gronwall inequalities}

\begin{lemma}
\label{lem:Gronwall_discrete}
  Let $(x_n)_{n\in\N_0}$ in $[0,\infty)$, $x_0 = 0$, $(\alpha_n)_{n\in\N_0}$ in $[1,\infty)$ and $(\beta_n)_{n\in\N_0}$ in $[0,\infty)$ such that
  \[x_{n+1}\leq \alpha_n x_n + \beta_n \quad(n\in\N_0).\]
  Then
  \[x_{n} \leq \prod_{k=0}^{n-1} \alpha_k \sum_{k=0}^{n-1} \beta_k \quad(n\in\N_0).\]
\end{lemma}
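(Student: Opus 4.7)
The plan is to proceed by a straightforward induction on $n \in \N_0$. For the base case $n = 0$, I note that $x_0 = 0$ by assumption, while the right-hand side is an empty product times an empty sum and so equals $0$; thus the inequality holds with equality.

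For the inductive step, I would assume the bound $x_n \leq \prod_{k=0}^{n-1} \alpha_k \sum_{k=0}^{n-1} \beta_k$ holds and combine it with the hypothesized recursive inequality to obtain
\[
x_{n+1} \leq \alpha_n x_n + \beta_n \leq \alpha_n \prod_{k=0}^{n-1} \alpha_k \sum_{k=0}^{n-1} \beta_k + \beta_n = \prod_{k=0}^{n} \alpha_k \sum_{k=0}^{n-1} \beta_k + \beta_n.
\]
It remains to absorb the leftover term $\beta_n$ into the product-sum expression. This is where the assumption $\alpha_k \geq 1$ is essential: it guarantees $\prod_{k=0}^{n} \alpha_k \geq 1$, and since $\beta_n \geq 0$, we have $\beta_n \leq \prod_{k=0}^{n} \alpha_k \cdot \beta_n$. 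Adding this to the previous estimate yields $x_{n+1} \leq \prod_{k=0}^{n} \alpha_k \sum_{k=0}^{n} \beta_k$, closing the induction.

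There is no genuine obstacle to overcome; the only mildly subtle point is recognizing that the lower bound $\alpha_k \geq 1$ is precisely what allows the simple product-sum form stated in the lemma, in place of the sharper but less tidy expression $x_n \leq \sum_{k=0}^{n-1} \bigl(\prod_{j=k+1}^{n-1} \alpha_j\bigr) \beta_k$ that one would derive without the lower bound.
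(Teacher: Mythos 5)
Your proof is correct and is essentially identical to the paper's: both proceed by induction on $n$, apply the recursive inequality together with the inductive hypothesis, and use $\alpha_k\geq 1$ (hence $\prod_{k=0}^{n}\alpha_k\geq 1$) to absorb the leftover $\beta_n$ into the product-sum. Your write-up is in fact slightly cleaner, since the paper's displayed chain momentarily drops the $+\beta_n$ term before restoring it.
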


\begin{proof}
  For $n=0$ the assertion ist tivial.
  For the induction step from $n$ to $n+1$ we compute
  \begin{align*}
    x_{n+1} & \leq \alpha_n x_n + \beta_n 
    \leq \alpha_n\bigg(\prod_{k=0}^{n-1} \alpha_k \sum_{k=0}^{n-1} \beta_k\bigg) \\
    & \leq \prod_{k=0}^{n} \alpha_k \sum_{k=0}^{n-1}\beta_k + \prod_{k=0}^{n} \alpha_k \beta_n
    = \prod_{k=0}^n \alpha_k \sum_{k=0}^n \beta_k. \qedhere
  \end{align*}
\end{proof}

\begin{lemma}
\label{lem:Gronwall_cont}
  Let $\alpha\from[0,\infty)\to [0,\infty)$ be measurable, $\mu$ a nonnegative Borel measure on $[0,\infty)$ and $u\in \mathcal{L}_{1,\loc}([0,\infty),\mu)$ such that
  \[u(t) \leq \alpha(t) + \int_{[0,t)} u(s)\,d\mu(s) \quad(t\geq 0).\]
  Then
  \[u(t) \leq \alpha(t) + \int_{[0,t)} \alpha(s)\exp\bigl(\mu\bigl((s,t)\bigr)\bigr)\,d\mu(s) \quad(t\geq 0).\]
\end{lemma}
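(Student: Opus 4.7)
The plan is to iterate the given inequality, use Fubini's theorem to evaluate the resulting iterated integrals, and recognize the exponential series.

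First I would set $Lv(t) := \int_{[0,t)} v(s)\,d\mu(s)$, so that the hypothesis reads $u \le \alpha + Lu$ pointwise. Iterating this $n+1$ times yields
\[u(t) \le \alpha(t) + \sum_{k=1}^{n} L^{k}\alpha(t) + L^{n+1}u(t),\]
where unfolding the integrals gives
\[L^{k}\alpha(t) = \int_{0\le s_k<s_{k-1}<\cdots<s_1<t} \alpha(s_k)\, d\mu(s_1)\cdots d\mu(s_k),\]
and similarly for $L^{n+1}u(t)$ with $\alpha$ replaced by $u$ and one more variable.

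Second, I would apply Fubini to pull $\alpha(s_k)$ outside and obtain
\[L^{k}\alpha(t) = \int_{[0,t)} \alpha(s)\,\mu^{k-1}\!\bigl(\{(s_1,\dots,s_{k-1}) : s<s_{k-1}<\cdots<s_1<t\}\bigr)\,d\mu(s).\]
The standard simplex estimate
\[\int_{s<s_{k-1}<\cdots<s_1<t} d\mu(s_1)\cdots d\mu(s_{k-1}) \le \frac{\mu((s,t))^{k-1}}{(k-1)!}\]
holds for any nonnegative Borel measure: summing the integrand over all $(k-1)!$ orderings of the coordinates gives a function $\le 1$ on $(s,t)^{k-1}$, so the left-hand side is at most $\mu((s,t))^{k-1}/(k-1)!$. (Atoms create overlap on diagonals, which only helps the inequality.) Summing the resulting bound on $L^{k}\alpha(t)$ over $k\ge1$ and invoking the Taylor series of $\exp$ gives
\[\sum_{k=1}^{\infty} L^{k}\alpha(t) \le \int_{[0,t)} \alpha(s)\,e^{\mu((s,t))}\,d\mu(s).\]

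Finally, I would dispatch the remainder. Since $u\in\mathcal L_{1,\loc}([0,\infty),\mu)$, the same simplex bound gives
\[L^{n+1}u(t) \le \frac{\mu([0,t))^{n}}{n!}\int_{[0,t)}\abs{u(s)}\,d\mu(s) \to 0 \quad (n\to\infty)\]
for each fixed $t\ge 0$. Passing to the limit in the iterated inequality yields the claim. The only subtlety I anticipate is the atomic case of $\mu$: one has to be careful that the simplex volume estimate is only an inequality, not an equality, but since we need an upper bound on $u$ this is fine and no further argument (e.g.\ approximation of $\mu$ by non-atomic measures) is required.
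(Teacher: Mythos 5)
Your proposal is correct and follows essentially the same route as the paper: iterate the inequality, bound the $\mu^{\otimes k}$-measure of the ordered simplex by $\mu((s,t))^k/k!$ via the permutation/disjoint-orderings argument, show the remainder term vanishes using local $\mu$-integrability of $u$, and sum the exponential series. Your explicit remark that atoms only make the simplex estimate an inequality (which is all that is needed) is a point the paper's proof leaves implicit.
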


\begin{proof}
  (i) Iterating the inequality yields
  \[u(t)\leq \alpha(t) + \int_{[0,t)} \alpha(s) \sum_{k=0}^{n-1}\mu^{\otimes k}\bigl(A_k(s,t)\bigr) \, d\mu(s) + R_n(t) \quad(n\in\N, t\geq 0),\]
  where
  \[R_n(t) := \int_{[0,t)} u(s) \mu^{\otimes n}\bigl(A_n(s,t)\bigr) \, d\mu(s)\]
  is the remainder, 
  \[A_k(s,t) := \set{(s_1,\ldots,s_k)\in(s,t)^k}{s_1<\ldots<s_k}\]
  is an $k$-dimensional simplex and
  \[\mu^{\otimes 0}\bigl(A_0(s,t)\bigr) := 1.\]
  
  (ii) Let $0\leq s<t$. We now prove
  \[\mu^{\otimes k}\bigl(A_k(s,t)\bigr) \leq \frac{\mu\bigl((s,t)\bigr)^k}{k!} \quad(k\in\N_0).\]
  Indeed, let $S_k$ be the set of all permutations of $\set{1,\ldots,k}$. For $\sigma\in S_k$ let
  \[A_{k,\sigma}(s,t) := \set{(s_1,\ldots,s_k)\in(s,t)^k}{s_{\sigma(1)}<\ldots<s_{\sigma(k)}}.\]
  Then for $\sigma\neq \sigma'$ we obtain $A_{k\sigma}(s,t)\cap A_{k,\sigma'}(s,t) = \varnothing$. 
  Furthermore,
  \[\bigcup_{\sigma\in S_k} A_{k,\sigma}(s,t) \subseteq (s,t)^k.\]
  Hence,
  \[k! \mu^{\otimes k} \bigl(A_k(s,t)\bigr) = \sum_{\sigma\in S_k}\mu^{\otimes k} \bigl(A_k(s,t)\bigr) \leq \mu^{\otimes k}\bigl((s,t)^k\bigr) = \mu\bigl((s,t)\bigr)^k.\]
  
  (iii) By (ii), we obtain
  \[\abs{R_n(t)} \leq \frac{\mu\bigl((s,t)\bigr)^n}{n!} \int_{[0,t)} \abs{u(s)}\,d\mu(s) \quad(n\in\N, t\geq 0).\]
  Since $u$ is locally integrable with respect to $\mu$ we obtain $R_n\to 0$ pointwise.
  Thus, (i) yields
  \begin{align*}
    u(t) & \leq \alpha(t) + \int_{[0,t)} \alpha(s) \sum_{k=0}^{n-1}\frac{\mu\bigl((s,t)\bigr)^k}{k!} \, d\mu(s) + R_n(t) \\
    & \leq \alpha(t) + \int_{[0,t)} \alpha(s) \exp\bigl(\mu\bigl((s,t)\bigr)\bigr) \, d\mu(s) + R_n(t) \\
    & \to \alpha(t) + \int_{[0,t)} \alpha(s) \exp\bigl(\mu\bigl((s,t)\bigr)\bigr) \, d\mu(s). \qedhere
  \end{align*}
\end{proof}

\section{Unimodular Matrices}

An $n\times n$-matrix $A$ with complex entries is called unimodular, if $\det A = 1$. Let $SL(n,\C)$ be the set of all unimodular $n\times n$-matrices.

\begin{lemma}
\label{lem:norm_unimodular}
  For $A\in SL(2,\C)$ we have
  \[\norm{A} = \norm{A^{-1}}.\]
\end{lemma}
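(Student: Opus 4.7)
The plan is to exploit either the explicit cofactor formula for $2\times 2$ inverses or, equivalently, the elementary fact that singular values of $A$ and $A^{-1}$ are reciprocals. Throughout, $\norm{\cdot}$ denotes the spectral (Euclidean operator) norm, matching the usage in Lemma \ref{lem:Jacobi_diff} and the estimate $\norm{T_z(n,0)}\leq\sqrt{\norm{T_z(n,0)}_{1\to 1}\norm{T_z(n,0)}_{\infty\to\infty}}$ in the proof of Theorem \ref{thm:Jacobi}.

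First, I would write $A = \begin{pmatrix} a & b \\ c & d \end{pmatrix}$ with $ad-bc = 1$, so that
\[
A^{-1} = \begin{pmatrix} d & -b \\ -c & a\end{pmatrix}.
\]
Setting $J := \begin{pmatrix} 0 & 1 \\ -1 & 0\end{pmatrix}$, a direct check shows $J^{*}J = I$ (so $J$ is unitary) and
\[
A^{-1} = J\,A^{\top}\,J^{-1}.
\]
Since the spectral norm is invariant under unitary conjugation and under transposition (the singular values of $A$ and $A^{\top}$ coincide), this gives
\[
\norm{A^{-1}} = \norm{J A^{\top} J^{-1}} = \norm{A^{\top}} = \norm{A}.
\]

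As an alternative, and perhaps even cleaner, route I would use singular values: let $\sigma_1 \geq \sigma_2 > 0$ denote the singular values of $A$. Then $\sigma_1 \sigma_2 = \abs{\det A} = 1$, and the singular values of $A^{-1}$ are $1/\sigma_2 \geq 1/\sigma_1$. Consequently,
\[
\norm{A^{-1}} = 1/\sigma_2 = \sigma_1 = \norm{A}.
\]

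There is essentially no obstacle here; the only subtle point is pinning down which norm is intended, and the paper's earlier usage makes clear it is the spectral norm, for which both arguments above give a one-line proof.
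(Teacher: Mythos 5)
Your proposal is correct, and both of your arguments are genuinely different from the one in the paper. The paper first invokes the Schur decomposition to reduce to an upper triangular $A=\left(\begin{smallmatrix} a & b\\ 0 & 1/a\end{smallmatrix}\right)$, then computes $A^{T}A$ and $(A^{-1})^{T}A^{-1}$ explicitly and observes that these two matrices have equal trace and determinant, hence equal eigenvalues and equal norm. Your first route replaces this computation by the structural identity $A^{-1}=JA^{\top}J^{-1}$ (the defining property of $\SL(2,\C)$ as the symplectic group in dimension $2$), combined with invariance of the spectral norm under unitary conjugation and transposition; your second route, via $\sigma_1\sigma_2=\abs{\det A}=1$ and the fact that the singular values of $A^{-1}$ are the reciprocals $1/\sigma_2\geq 1/\sigma_1$, is arguably the cleanest and most conceptual, and it also makes transparent the stronger statement $\norm{A^{-1}}=\norm{A}\geq 1$. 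Both of your verifications check out (the identity $JA^{\top}J^{-1}=\left(\begin{smallmatrix} d & -b\\ -c & a\end{smallmatrix}\right)$ is exactly the cofactor inverse when $ad-bc=1$), and as a side benefit your arguments use the conjugate transpose correctly throughout, whereas the paper's computation with $A^{T}A$ rather than $A^{*}A$ is, strictly speaking, only the Gram matrix for real entries --- a minor blemish your approach avoids.
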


\begin{proof}
  By the Schur decomposition we may assume that
  \[A = \begin{pmatrix} a & b \\ 0 & \frac{1}{a}\end{pmatrix},\]
  with $a,b\in\C$.
  Then 
  \[A^{-1} = \begin{pmatrix} \frac{1}{a} & -b \\ 0 & a\end{pmatrix}.\]
  We compute
  \[A^TA = \begin{pmatrix} a^2 & ab \\ ab & b^2 + \frac{1}{a^2}\end{pmatrix},\qquad (A^{-1})^T A^{-1} = \begin{pmatrix} \frac{1}{a^2} & -\frac{b}{a} \\ -\frac{b}{a} & a^2+ b^2 \end{pmatrix}.\]
  Since these two matrices have the same traces and determinants, their characteristic polynomials are equal and therefore they have the same eigenvalues. Hence, $\norm{A} = \norm{A^{-1}}$.
\end{proof}

\bigskip

\noindent
Christian Seifert\\
Technische Universit\"at Hamburg-Harburg\\
Institut f\"ur Mathematik \\
Schwarzenbergstra{\ss}e 95 E \\
21073 Hamburg, Germany \\
{\tt christian.se\rlap{\textcolor{white}{hugo@egon}}ifert@tuhh\rlap{\textcolor{white}{darmstadt}}.de}

\end{document}